\documentclass[12pt]{article}

\usepackage{amssymb}
\usepackage{amsmath}
\usepackage{comment}
\usepackage{enumerate}
\usepackage{amsthm}
\usepackage[flushmargin]{footmisc}

\newtheorem{theorem}{Theorem}[section]
\newtheorem{lemma}[theorem]{Lemma}
\newtheorem{prop}[theorem]{Proposition}
\theoremstyle{definition}

\theoremstyle{remark}

\numberwithin{equation}{section}

\newcommand{\Q}{\mathbf Q}

\newcommand{\Z}{\mathbf Z}

\newcommand{\Gal}{\mathrm{Gal}}

\newcommand{\Hol}{\mathrm{Hol}}

\newcommand{\Sym}{\operatorname{Sym}}

\newcommand{\GL}{\mathrm{GL}}

\newcommand{\SL}{\mathrm{SL}}

\newcommand{\End}{\operatorname{End}}

\newcommand{\Aut}{\operatorname{Aut}}

\newcommand{\Id}{\operatorname{Id}}

\newcommand{\al} {{\alpha}}
\newcommand{\la} {{\lambda}}
\newcommand{\wk} {{\widetilde{K}}}
\newcommand{\ii} {{\imath}}

\begin{document}

\title{From Galois to Hopf Galois: \\ theory and practice}

\author{Teresa Crespo, Anna Rio, Montserrat Vela}

\maketitle

\let\thefootnote\relax\footnote{T. Crespo acknowledges support by grants MTM2012-33830, Spanish
Science Ministry, and 2009SGR 1370; A.Rio and M. Vela acknowledge
support by grants MTM2012-34611, Spanish Science Ministry, and
2009SGR 1220. \\ MSC 2010: Primary 12F10; Secondary: 13B05, 16T05,
16W30}

\begin{abstract}
Hopf Galois theory expands the classical Galois theory by considering the Galois property in terms of
the action of the group algebra $k[G]$ on $K/k$ and then replacing it by the action of a Hopf algebra.
We review the case of separable extensions where the Hopf Galois property admits a group-theoretical
formulation suitable for counting and classifying, and also to perform explicit computations and
explicit descriptions of all the ingredients involved in a Hopf Galois structure. At the end we give
just a glimpse of how this theory is used in the context of Galois module theory for wildly ramified
extensions.
\end{abstract}

\section{Introduction}

A Galois extension is an algebraic field extension $K/k$ that is normal and separable.
The significance of being a Galois extension is that
$K/k$  has a Galois group $G$ and obeys the fundamental theorem of Galois theory:
there is a one-to-one correspondence between the lattice of its intermediate fields and the lattice of subgroups of $G$.

Hopf Galois theory arises as an attempt to expand classical Galois theory to more general settings.
In order to enlarge the category of algebraic objects attached to field extensions, since we have a fixed base
field $k$,  the group $G$ leads to the group algebra $k[G]$, which is a cocommutative Hopf algebra with
comultiplication $\Delta(g) = g\otimes g$, counit $\epsilon(g)=1$  and antipode $S(g)=g^{-1}$, for all $g \in G$.  The
essential requirement to proceed is then fulfilled: the lattice of sub-Hopf algebras of $k[G]$ is in
one-to-one correspondence with the lattice of subgroups of $G$.

Putting this machinery to work, the Galois action of $G$ in $K$ as automorphism group extends linearly to an action
$k[G]\times K\to K$ which provides a Hopf action $\mu:k[G]\to \End_k(K)$. Then, the condition of being
a Galois extension can be reformulated in the following way:
$$K/k \mbox{ is  Galois } \iff (1,\mu):  K \otimes_k k[G] \rightarrow \End_k(K)  \mbox{ is an isomorphism, }$$
where $(1,\mu)(s\otimes h)(t)= s \cdot(\mu(h)(t))$. Now, in order to generalize we just have to replace $k[G]$ by an
object of a suitable algebraic category. From now on, we restrict ourselves to the case of finite extensions.

The concept of Hopf Galois extension is due to Chase and Sweedler \cite{ChaseSw}:
if $K/k$ is a finite extension of fields, we say that $K/k$ is a \emph{Hopf Galois extension} if there exists
a finite cocommutative $k-$Hopf algebra ${H}$
and a Hopf  action $\mu: H\to \End_k(K)$ 
such that
$$(1,\mu):  K \otimes_k H \rightarrow \End_k(K) \mbox{ is an } \mbox{isomorphism.}$$ 
That is, $K$ is an $H$-module and the endomorphisms of $K$ are all obtained from the homotheties and the Hopf action.
From this definition we get $\dim H=[K:k]$.

In the Hopf Galois setting the following \emph{fundamental theorem} holds:

\begin{theorem}[\cite{ChaseSw} Theorem 7.6] Let $K/k$ be a Hopf Galois extension with algebra $H$ and  Hopf action $\mu: H\to \End_k(K)$.
For a $k$-sub-Hopf algebra $H'$ of $H$ we define
$$
K^{H'}=\{x\in K \mid \mu(h)(x)=\epsilon(h)\cdot x \mbox{ for all } h\in H'\},
$$
where $\epsilon$ is the counity of $H$.
Then, $K^{H'}$ is a subfield of $K$, containing $k$, and
$$
\begin{array}{rcl}
{\mathcal F}_H:\{H'\subseteq H \mbox{ sub-Hopf algebra}\}&\longrightarrow&\{\mbox{Fields }E\mid k\subseteq E\subseteq K\}\\
H'&\to &K^{H'}
\end{array}
$$
is injective and inclusion reversing.
\end{theorem}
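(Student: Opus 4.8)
I would prove this in three steps.

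\emph{(i) $K^{H'}$ is an intermediate field.} Recall that a Hopf action makes $K$ into an $H$-module algebra, so that, writing $\Delta(h)=\sum h_{(1)}\otimes h_{(2)}$, one has $\mu(h)(1_K)=\epsilon(h)1_K$ and $\mu(h)(xy)=\sum\mu(h_{(1)})(x)\,\mu(h_{(2)})(y)$. Since each $\mu(h)$ and $\epsilon$ are $k$-linear, $k\subseteq K^{H'}$ and $K^{H'}$ is closed under addition; since $H'$ is a sub-bialgebra we have $\Delta(h)\in H'\otimes H'$, and the measuring identity then shows $K^{H'}$ is closed under multiplication. Thus $K^{H'}$ is a $k$-subalgebra of the finite extension $K/k$, hence a field. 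The same computation gives, for $e\in K^{H'}$, $x\in K$ and $h\in H'$, the identity $\mu(h)(ex)=\sum\epsilon(h_{(1)})\,e\,\mu(h_{(2)})(x)=e\,\mu(h)(x)$; I record that \emph{every $\mu(h)$ with $h\in H'$ is $K^{H'}$-linear}.

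\emph{(ii) Inclusion reversing, and a reduction of injectivity.} If $H''\subseteq H'$, the condition defining $K^{H'}$ is more restrictive than the one defining $K^{H''}$, so $K^{H'}\subseteq K^{H''}$. For injectivity the plan is to prove that $(1,\mu)$ restricts to a $k$-linear isomorphism $K\otimes_k H'\xrightarrow{\ \sim\ }\End_{K^{H'}}(K)$. Granting this: if $K^{H'}=K^{H''}=:E$, then $(1,\mu)$ maps both $K\otimes_k H'$ and $K\otimes_k H''$ onto $\End_E(K)$, and since $(1,\mu)$ is injective on all of $K\otimes_k H$ we get $K\otimes_k H'=K\otimes_k H''$ inside $K\otimes_k H$; faithful flatness of $K$ over $k$ then forces $H'=H''$. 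The inclusion $(1,\mu)(K\otimes_k H')\subseteq\End_E(K)$ is immediate from (i), since the homotheties $t\mapsto st$ $(s\in K)$ and the operators $\mu(h)$ $(h\in H')$ are all $E$-linear; so everything reduces to surjectivity of $(1,\mu)\colon K\otimes_k H'\to\End_E(K)$.

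\emph{(iii) Surjectivity: the main obstacle.} This injective $k$-linear map is onto precisely when $\dim_k(K\otimes_k H')=\dim_k\End_E(K)$, i.e.\ when
$$[K:K^{H'}]=\dim_k H',$$
equivalently, when $K$ is again a Hopf Galois extension of $E=K^{H'}$ (with Hopf algebra $E\otimes_k H'$). This identity is the real content of the theorem, and the step I expect to be hard. I would establish it by dualizing: let $A=H^{*}$, a finite commutative $k$-Hopf algebra; the Hopf action $\mu$ corresponds to a coaction $\rho\colon K\to K\otimes_k A$ making $K$ an $A$-comodule algebra whose Galois map $K\otimes_k K\to K\otimes_k A$ is bijective. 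The sub-Hopf algebra $H'$ corresponds to the quotient Hopf algebra $\overline{A}=A/(H')^{\perp}$, and $K^{H'}$ is exactly the subalgebra of $\overline{A}$-coinvariants. One then checks that $K$ is $\overline{A}$-Galois over $E$ — a descent statement that causes no difficulty here, all objects being finite-dimensional $k$-vector spaces (the relevant freeness being that of $H$ over $H'$, by Nichols--Zoeller) — and bijectivity of the resulting Galois map $K\otimes_E K\xrightarrow{\ \sim\ }K\otimes_k\overline{A}$ forces $[K:E]^{2}=[K:E]\cdot\dim_k\overline{A}=[K:E]\cdot\dim_k H'$, which is the identity wanted. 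In the separable case discussed later in the paper the same identity can instead be read off directly from the group-theoretic descriptions of $H'$ and $K^{H'}$, which is the most transparent route.
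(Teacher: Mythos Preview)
The paper does not give a proof of this theorem; it is quoted from Chase--Sweedler \cite{ChaseSw} and used as a black box. So there is nothing to compare your argument against within the present paper.

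On its own merits, your outline is the standard route and is essentially correct. Step~(i) is routine, and the reduction in~(ii) is clean: once you know that $(1,\mu)$ carries $K\otimes_k H'$ isomorphically onto $\End_{K^{H'}}(K)$, injectivity of $\mathcal{F}_H$ follows exactly as you say. The only place where you are waving hands is the clause ``one then checks that $K$ is $\overline{A}$-Galois over $E$'' in~(iii). This is indeed the substantive step; in the commutative/cocommutative setting of Chase--Sweedler it is proved directly by showing that the Galois map for $\overline{A}$ factors through the Galois map for $A$ together with the faithfully flat surjection $A\to\overline{A}$, and your appeal to freeness of $H$ over $H'$ (Nichols--Zoeller, or the earlier cocommutative version) is the right ingredient to make that factorization go through. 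Your closing remark is also accurate: in the separable situation treated later in the paper, where $H=\widetilde{K}[N]^G$ and sub-Hopf algebras correspond to $\lambda(G)$-stable subgroups $N'\subseteq N$, the identity $[K:K^{H'}]=\dim_k H'$ can be read off from the group-theoretic description without passing through the dual picture.
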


The theory of Hopf Galois extensions was first considered to study purely inseparable field extensions (see \cite{ChaseSw}). Chase found that the fundamental theorem
of Galois theory in its strong form does not hold for Hopf Galois structures on purely inseparable extensions of exponent $>1$. That led Chase in \cite{Chase} (and later in \cite{Chase2}) to develop a fundamental theorem
of Galois theory for purely inseparable field extensions where the Hopf Galois action is by the Hopf algebra $H_t$ representing the truncated automorphism scheme of $K/k$. But $K/k$ is not a $H_t$-Hopf Galois extension, beacause if $[K:k]=n$, then $\dim_k(H_t)=n^n$.

Greither and Pareigis \cite{GP} recovered the notion of Hopf Galois extension to look at separable extensions.
When we deal with separable field extensions, the technique of Galois descent shows that the property of being Hopf Galois is encoded in
the Galois group of the normal closure.  If we assume that $K/k$ is separable and
Hopf Galois, then $(\tilde K\otimes_k K)/\tilde K$ is also Hopf Galois, where $\tilde K$ is the normal closure of $K/k$. To prove this
one considers the $\tilde K-$Hopf algebra $\tilde K\otimes _k H$. If we denote $G=\Gal(\tilde K/k)$, then the action of $H$
on $K$ is recovered by identifying $H$ and $K$ with the fixed rings $(\tilde K\otimes _k H)^G$ and $(\tilde K\otimes_k K)^G$,
where $G$  acts on the left factor as automorphism group. This leads to the Greither and Pareigis
characterization and classification of Hopf Galois structures on separable field extensions, achieved by transforming
the problem into a group-theoretic problem involving the Galois group $G$.

\begin{theorem}[\cite{GP} Theorem 2.1]\label{defHG}
Let $K/k$ be a separable extension of degree $n$ and let $\tilde K/k$ be its Galois closure, $G=\Gal(\tilde K/k)$
and $G'=\Gal(\tilde K/K)$.

$K/k$ is a Hopf Galois extension if, and only if, there exists a regular subgroup $N$ of $S_n$ normalized by $G$, where $G$ is identified as a subgroup of $S_n$ via the action of $G$ on the left cosets $G/G'$.
\end{theorem}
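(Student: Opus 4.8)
\medskip
\noindent\emph{Sketch of the proof.} The plan is to transport the question, by faithfully flat Galois descent, to the split $\tilde K$-algebra $\tilde K\otimes_k K$, to classify Hopf Galois structures in that split setting, and then to record which of them descend to $k$.

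I would begin with the small observation that the permutation representation $G\to S_n$ afforded by the left translation action of $G$ on $X:=G/G'$ is faithful: its kernel is the largest normal subgroup $M$ of $G$ contained in $G'$, and if $M\neq\{1\}$ then $\tilde K^M/k$ would be a normal (indeed Galois) extension with $K\subseteq\tilde K^M\subsetneq\tilde K$, contradicting the minimality of the Galois closure $\tilde K$. So $G$ really is a subgroup of $S_n=\Sym(X)$. Next, since $\tilde K/k$ is Galois with group $G$ and $K=\tilde K^{G'}$, separability yields the classical isomorphism of $\tilde K$-algebras $\tilde K\otimes_k K\cong\prod_{x\in X}\tilde K=\operatorname{Map}(X,\tilde K)$, under which $G$ acts semilinearly on the right-hand side by permuting the factors through its (now faithful) action on $X$. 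Invoking the descent argument already indicated above --- the assignment $H\mapsto\bigl(\tilde K\otimes_k H,\ \mathrm{id}\otimes\mu\bigr)$, together with the canonical $G$-semilinear structure, is an equivalence between $k$-Hopf Galois structures on $K/k$ and $G$-stable $\tilde K$-Hopf Galois structures on $(\tilde K\otimes_k K)/\tilde K$, the inverse being $H'\mapsto (H')^{G}$ --- the problem reduces to describing the $G$-stable Hopf Galois structures on $\operatorname{Map}(X,\tilde K)/\tilde K$.

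The technical core is the classification of \emph{all} Hopf Galois structures on $\operatorname{Map}(X,\tilde K)/\tilde K$: I claim these correspond bijectively to regular subgroups $N\leq\Sym(X)$. One direction is explicit: for $N$ regular, a choice of base point identifies $X$ with $N$, and the group algebra $\tilde K[N]$ acts on $\operatorname{Map}(X,\tilde K)$ by translation; checking surjectivity of $\operatorname{Map}(X,\tilde K)\otimes_{\tilde K}\tilde K[N]\to\End_{\tilde K}\bigl(\operatorname{Map}(X,\tilde K)\bigr)$ on the idempotent basis, and then comparing dimensions, shows this is a Hopf Galois structure, independent of the base point up to the evident identifications. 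For the converse I would pass to the linear dual Hopf algebra $H^{*}$, which is commutative because $H$ is cocommutative, and use the torsor interpretation of Hopf Galois extensions: $\operatorname{Spec}(\tilde K\otimes_k K)$ is the constant $\tilde K$-scheme $\underline{X}$ and is a torsor under the affine group scheme $\operatorname{Spec}(H^{*})$; since it carries the rational point given by any $x\in X$, the torsor is trivial, so $\operatorname{Spec}(H^{*})\cong\underline{X}$ as schemes; but a group-scheme structure on a constant scheme over a field is just a group law on the underlying set, whence $\operatorname{Spec}(H^{*})=\underline{N}$, $H\cong\tilde K[N]$, and the triviality of the torsor exactly says that $N$ acts regularly on $X$. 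This step --- forcing the Hopf algebra to be a group algebra in the split case --- is the one I expect to be the main obstacle; the remainder is bookkeeping.

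It then remains to combine the two halves. The $G$-descent action permutes the set of regular subgroups of $S_n=\Sym(X)$ by $N\mapsto gNg^{-1}$, where $g$ stands for the image of $g\in G$ in $S_n$: indeed, the translation action of $N$ on $\operatorname{Map}(X,\tilde K)$ is defined over the prime field, so only the permutation part of the semilinear descent action is felt. Hence a Hopf Galois structure descends --- equivalently, $K/k$ is Hopf Galois --- if and only if some regular subgroup $N\leq S_n$ satisfies $gNg^{-1}=N$ for all $g\in G$, that is, $N$ is normalized by $G$. In the other direction, given such an $N$, the $k$-Hopf algebra $(\tilde K[N])^{G}$ --- with $G$ acting diagonally, on $\tilde K$ through the Galois action and on $N$ by conjugation inside $S_n$ --- descends the translation structure and realizes $K/k$ as a Hopf Galois extension, which establishes the equivalence.
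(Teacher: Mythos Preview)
Your proof is correct and follows the same Galois-descent strategy that the paper sketches in the paragraph preceding the theorem: base-change to $\tilde K$, classify Hopf Galois structures on the split algebra $\tilde K\otimes_k K\cong\operatorname{Map}(G/G',\tilde K)$, and then identify the descent condition with normalization of $N$ by $\lambda(G)$. The paper itself does not give a full proof but only outlines this approach and cites \cite{GP}; your torsor/group-scheme argument for the split-case classification (forcing $H\cong\tilde K[N]$ via triviality of the $\operatorname{Spec}(H^{*})$-torsor $\underline{X}$) is a clean and correct way to carry out the step the paper leaves implicit.
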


The identification of $G$ as a subgroup of $S_n$ mentioned in the theorem is given by
$$
 \begin{array}{rl}
\lambda: & G\rightarrow \Sym(G/G')\\
 &g\rightarrow (\lambda_g: xG'\mapsto gxG')\, .
\end{array}
$$
Any enumeration of the left cosets provides an identification of $G$ and $\lambda(G)$ as a transitive subgroup of the symmetric group $S_n$.

For a separable extension $K/k$, say that \emph{the fundamental theorem of Galois theory holds in its strong form for $K/k$} if there exists a Hopf Galois structure on $K/k$ for which the strong form holds. Clearly the fundamental theorem of Galois theory holds in its strong form for every Galois extension $K/k$. In \cite{GP} a class of non-Galois extensions is identified for which the strong form also holds. We say
that $K/k$ is an \emph{almost classically Galois extension} if there exists a regular subgroup $N$ of $S_n$ normalized by $G$ and contained in $G$, where $G$ is identified as a subgroup of $S_n$ as in Theorem \ref{defHG}

\begin{theorem}[\cite{GP} 4.1]
Let $K/k$ be a separable extension of degree $n$ and let $\tilde K/k$ be its Galois closure, $G=\Gal(\tilde K/k)$
and $G'=\Gal(\tilde K/K)$.

$K/k$ is almost classically Galois if, and only if, $G'$ has a normal complement $N$ in $G$.
\end{theorem}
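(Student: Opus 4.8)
The plan is to translate the defining condition of an almost classically Galois extension — the existence of a regular subgroup $N$ of $S_n$ that is normalized by $G$ and contained in $G$ — directly into a statement about subgroups of the abstract group $G$, using that $\lambda$ identifies $G$ with the transitive subgroup $\lambda(G)\le S_n$ acting on $G/G'$ by left translation.

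First I would record the group-theoretic content of each adjective. Because $\tilde K/k$ is the Galois closure of $K/k$, the subgroup $G'$ contains no nontrivial normal subgroup of $G$: the fixed field of $\mathrm{core}_G(G')=\bigcap_{x\in G}xG'x^{-1}$ is normal over $k$ (the core is normal in $G$), contains $K=\tilde K^{G'}$ (the core lies in $G'$), and is contained in $\tilde K$, hence equals $\tilde K$ by minimality of the Galois closure, forcing $\mathrm{core}_G(G')=1$. Since $\ker\lambda=\mathrm{core}_G(G')$, the map $\lambda$ is injective and $\lambda\colon G\xrightarrow{\ \sim\ }\lambda(G)$. Consequently a subgroup $N\le S_n$ with $N\subseteq\lambda(G)$ is of the form $N=\lambda(M)$ for a unique $M\le G$; the condition that $N$ be normalized by $G$ (that is, by $\lambda(G)$) is equivalent to $M$ being normal in $G$; and the permutation action of $\lambda(M)$ on $G/G'$ is exactly the left-translation action of $M$.

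Next I would determine when this left-translation action of $M$ on $G/G'$ is regular. It is transitive precisely when $MG'=G$, and in that case orbit–stabilizer applied to the coset $eG'$, whose stabilizer in $M$ is $M\cap G'$, gives $|M|=[G:G']\cdot|M\cap G'|=n\cdot|M\cap G'|$. Hence the action is regular (simply transitive, equivalently transitive of degree $n$ by a group of order $n$) if and only if $MG'=G$ and $M\cap G'=1$, i.e.\ if and only if $M$ is a complement of $G'$ in $G$. Chaining the two paragraphs yields: $K/k$ is almost classically Galois $\iff$ there is a normal subgroup $M$ of $G$ with $MG'=G$ and $M\cap G'=1$ $\iff$ $G'$ has a normal complement in $G$.

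The only point requiring genuine care is the first step: one must consistently interpret "contained in $G$'' and "normalized by $G$'' through the embedding $\lambda$, and set up the bijection between $\{N\le\lambda(G)\}$ and $\{M\le G\}$ compatibly with the chosen enumeration of $G/G'$, so that regularity of $N$ as a permutation subgroup of $S_n$ matches simple transitivity of $M$ on $G/G'$. Everything after that is bookkeeping with orbits and stabilizers — no estimates and no delicate construction. It is worth remarking at the end that a normal complement $N$ of $G'$ in $G$ is in particular a regular subgroup of $S_n$ normalized by $G$, hence by Theorem \ref{defHG} gives a Hopf Galois structure on $K/k$; this is the structure for which the strong form of the fundamental theorem holds, which explains the terminology.
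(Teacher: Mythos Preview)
Your argument is correct. The paper itself does not supply a proof of this theorem: it is quoted from Greither and Pareigis \cite{GP} with only the statement given, so there is no in-paper proof to compare against. Your translation of the definition through the embedding $\lambda$ is exactly the intended one; the key points---that $\ker\lambda=\mathrm{core}_G(G')=1$ by minimality of the Galois closure, that normality of $N\le\lambda(G)$ in $\lambda(G)$ corresponds to normality of its preimage in $G$, and that regularity of the left-translation action of $M$ on $G/G'$ amounts to $MG'=G$ together with $M\cap G'=1$---are each established cleanly. The closing remark tying the normal complement back to Theorem~\ref{defHG} is also apt and matches how the paper uses the result.
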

In particular, if $K/k$ is Galois, then $G'=1$ and has normal complement $N=G$.
The following theorem provides a justification for the notion of almost classically Galois extensions.

\begin{theorem}[\cite{GP} 5.2]
If $K/k$ is almost classically Galois, then there is a Hopf algebra $H$ such that $K/k$ is Hopf Galois with algebra $H$ and the main theorem holds
in its strong form, namely there is a bijective correspondence between $k$-sub-Hopf algebras of $H$ and $k$-subfields of $K.$
\end{theorem}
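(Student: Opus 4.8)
The plan is to exhibit, starting from a normal complement of $G'$, the Hopf Galois structure that is ``classical after a base change'', and then to match its lattice of sub-Hopf algebras with the lattice of subfields of $K$ by a cardinality argument. First I would unwind the hypothesis: by \cite{GP}, 4.1, almost classically Galois means that $G'$ has a normal complement $N$ in $G$, so $N\triangleleft G$, $N\cap G'=1$, $NG'=G$; thus $G=N\rtimes G'$ and $N$ acts simply transitively on $G/G'$. Set $L:=\tilde K^{N}$. Since $N\triangleleft G$, the extension $L/k$ is Galois with $\Gal(L/k)\cong G/N\cong G'$, and from $K\cap L=\tilde K^{NG'}=\tilde K^{G}=k$, $KL=\tilde K^{N\cap G'}=\tilde K$ and $[K:k][L:k]=n\,|G'|=|G|$ one gets $\tilde K=K\otimes_k L$; hence $L\otimes_k K\cong\tilde K$ as $L$-algebras and $\tilde K/L$ is Galois with group $N$ — that is, $K/k$ becomes Galois, with group $N$, after the base change to $L$. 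I would then take
$$H:=\bigl(L[N]\bigr)^{\Gal(L/k)},$$
the $k$-form obtained from the classical Hopf algebra $L[N]$ of $\tilde K/L$ by Galois descent along $L/k$, with $\Gal(L/k)\cong G'$ acting on $L$ through the Galois action and on $N$ by conjugation (an action by automorphisms of $N$, since $N\triangleleft G$). Then $\dim_kH=|N|=n$; $H$ acts on $K=(L\otimes_kK)^{\Gal(L/k)}$ by descent of the Galois action of $N$ on $\tilde K$; and, as $L/k$ is faithfully flat, the Chase--Sweedler map $(1,\mu):K\otimes_kH\to\End_k(K)$ becomes, after $\otimes_kL$, the classical Galois isomorphism $\tilde K[N]\to\End_L(\tilde K)$ for $\tilde K/L$ — hence $(1,\mu)$ is an isomorphism and $K/k$ is Hopf Galois with algebra $H$. (Alternatively one invokes Theorem \ref{defHG}: $C_{S_n}(\lambda(N))$ is regular, being the centralizer of the regular subgroup $\lambda(N)$, and is normalized by $\lambda(G)$ since $\lambda(G)$ normalizes $\lambda(N)$; for $K/k$ Galois one gets $N=G$, $L=k$, $H=k[G]$.)

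Next I would identify both lattices with the same group-theoretic object. By Galois descent along $L/k$, the $k$-sub-Hopf algebras of $H$ correspond to the $\Gal(L/k)$-stable $L$-sub-Hopf algebras of $L[N]$; the latter are the $L[N']$ with $N'\le N$ (the grouplikes of $L[N]$ being the elements of $N$, as in the Greither--Pareigis analysis), and $L[N']$ is $\Gal(L/k)$-stable precisely when $g'N'g'^{-1}=N'$ for all $g'\in G'$. On the other side, the Galois correspondence for $\tilde K/k$ identifies the fields $E$ with $k\subseteq E\subseteq K$ with the subgroups $S$ satisfying $G'\subseteq S\subseteq G$; since $G=N\rtimes G'$, Dedekind's modular law makes $S\mapsto S\cap N$ a bijection of this set onto $\{\,N'\le N:g'N'g'^{-1}=N'\ \forall g'\in G'\,\}$, with inverse $N'\mapsto N'\rtimes G'$. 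So the $k$-sub-Hopf algebras of $H$ and the intermediate fields of $K/k$ are both parametrised by the subgroups of $N$ normalized by $G'$; in particular they are finite of equal cardinality, and tracing the identifications one verifies $\mathcal F_H\bigl((L[N'])^{\Gal(L/k)}\bigr)=\tilde K^{\,N'\rtimes G'}$, so $\mathcal F_H$ realizes this bijection.

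It then remains only to conclude: by \cite{ChaseSw}, Theorem 7.6, $\mathcal F_H$ has image among the intermediate fields of $K/k$ and is injective, and an injection between finite sets of equal cardinality is bijective; hence $\mathcal F_H$ is a bijection between the $k$-sub-Hopf algebras of $H$ and the $k$-subfields of $K$ — the strong form. The point I expect to need the most thought is not any single computation but the choice of $H$: the ``naive'' structure attached to $\lambda(N)$, i.e. $(\tilde K[N])^{G}$ with $G$ acting by conjugation on $N$, does \emph{not} have the strong form in general — already for $K/k$ Galois with $G$ nonabelian its sub-Hopf algebras correspond only to the \emph{normal} subgroups of $G$ — so one must use the ``opposite'' structure $(L[N])^{\Gal(L/k)}$, in which the inner part of the conjugation action has been removed. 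Once that structure is in hand, the two lattices are visibly isomorphic, and the remaining ingredients (Galois descent, the Greither--Pareigis dictionary, Dedekind's law) are routine.
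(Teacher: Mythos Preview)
Your proof is correct. The paper itself does not supply a proof of this result --- it is quoted from \cite{GP} --- so there is no in-paper argument to compare against directly. That said, your approach is essentially the one used by Greither and Pareigis, and it dovetails with the paper's own framework in Section~3.

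A brief comparison. In the Greither--Pareigis language of the paper, the Hopf Galois structure you construct is the one attached to the regular subgroup $M=C_{S_n}(\lambda(N))$ (which you mention parenthetically): since $\lambda(N)$ centralizes $M$, the $G$-action on $\tilde K[M]$ factors through $G/N\cong G'$, and $\tilde K[M]^G=(L[M])^{G'}$ with $M\cong N$ as $G'$-groups --- exactly your $H$. The paper's Section~3 records that sub-Hopf algebras of $\tilde K[M]^G$ correspond to $\lambda(G)$-stable subgroups of $M$; in your formulation this becomes ``$G'$-stable subgroups of $N$'', and you then match these with intermediate fields via Dedekind's modular law. The identification $K^{(L[N'])^{G'}}=\tilde K^{\,N'\rtimes G'}$ is correct (fixed by $H'$ $\Leftrightarrow$ fixed by $L\otimes H'=L[N']$ after base change, then intersect with $K=\tilde K^{G'}$), and the cardinality argument closes the proof.

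Your closing remark is also on point and matches the paper's Theorem~\ref{gal}: the structure coming from $\lambda(N)$ itself (equivalently, $\tilde K[N]^G$ with the full conjugation action) only sees the \emph{normal} sublattice, so the choice of the ``opposite'' structure is the crux. Nothing is missing; the descent, the identification of sub-Hopf algebras of $L[N]$ with group subalgebras, and the modular-law bijection are all standard and used in the paper or in \cite{GP}.
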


In some sense, almost classically Galois extensions are too similar to classical Galois extension and to get a better understanding of
the significance of the Hopf Galois property we should work with separable extensions being Hopf Galois but not almost classically Galois.
In degree $\le 7$, there are no such extensions, as we shall show in more detail in sections 2 and 4 below. The smallest example can be found in  degree 8 over the rational field, as we show in \cite{CRV1}. An example of degree 16 was constructed in \cite{GP}, where the base field $k$ is a quadratic extension of $\Q$. In \cite{CRV2}, we prove that the class of extensions for which the fundamental theorem of Galois theory holds in its strong form is larger than the class of almost classically Galois extensions by constructing a non-almost classically Galois extension for which the strong form holds.

Helpful for deciding the existence of a Hopf Galois structure on $K/k$ is a reformulation of Theorem \ref{defHG}, due to Childs, that reverses the relationship between $G$ and $N$: instead of looking
 for regular subgroups of $S_n$ normalized by $G$ one should look for embeddings of $G$ into the holomorph $\Hol(N)=N\rtimes \Aut N$
 of a group $N$ of order $n$ (see \cite{Childs1} Proposition 1). The group $\Hol(N)$ has a natural embedding in $\Sym(N)\simeq S_n$. Since $\Hol(N)$ is much smaller than $S_n$, this breaks the problem into a collection of problems, parametrized by
the isomorphism classes of groups of order $n$ and more suitable to be considered for a systematic computational treatment.

\begin{theorem}\label{hol}
Let $K/k$ be a separable extension of degree $n$ and let $\tilde K/k$ be its Galois closure, $G=\Gal(\tilde K/k)$
and $G'=\Gal(\tilde K/K)$.
$K/k$ is a Hopf Galois extension if, and only if, there exists a group $N$ of order $n$  such that
$G\subseteq \Hol(N)$, where $G$ is identified as a subgroup of $S_n$ via the action of $G$ on the left cosets $G/G'$.
\end{theorem}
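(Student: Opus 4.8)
The plan is to deduce this from Theorem \ref{defHG} by producing a dictionary between regular subgroups $N$ of $S_n = \Sym(G/G')$ normalized by $\lambda(G)$, on the one hand, and embeddings of $G$ into $\Hol(N)$ for an abstract group $N$ of order $n$, on the other. The starting observation is that a regular subgroup $N$ of $\Sym(G/G')$ has order $n$, and the normalizer of $N$ inside $\Sym(G/G')$ is precisely the image of $N$ under the left regular representation together with its normalizer, which is a copy of $\Hol(N)$. More precisely, if $N$ acts regularly on a set $X$ of size $n$, then choosing a base point identifies $X$ with $N$ and the regular action with left translation; the normalizer of $N$ in $\Sym(X) = \Sym(N)$ is then exactly $\Hol(N) = N \rtimes \Aut(N)$, where $N$ acts by left translations and $\Aut(N)$ fixes the base point. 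This is the standard fact (going back to the classical description of the holomorph) that I would either cite or prove in a sentence: the normalizer consists of those permutations $\sigma$ with $\sigma N \sigma^{-1} = N$, and conjugation gives a homomorphism $N_{\Sym(N)}(N) \to \Aut(N)$ whose kernel is the centralizer of $N$, which for a regular action equals $N$ itself.

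Granting this, the forward direction goes as follows. Suppose $K/k$ is Hopf Galois. By Theorem \ref{defHG} there is a regular subgroup $N \subseteq S_n = \Sym(G/G')$ normalized by $\lambda(G)$. Let $N$ denote also the abstract group underlying it. Since $\lambda(G)$ normalizes $N$ inside $\Sym(G/G')$ and, having fixed a base point, $\Sym(G/G')$ is identified with $\Sym(N)$ with $N_{\Sym(N)}(N) = \Hol(N)$, we get $\lambda(G) \subseteq \Hol(N)$. Because $K/k$ is separable, $G'$ contains no nontrivial normal subgroup of $G$, so the action of $G$ on $G/G'$ is faithful and $\lambda$ is injective; hence $G \hookrightarrow \Hol(N)$, which is the required embedding (and it is compatible with the action on $G/G'$ via the natural embedding $\Hol(N) \hookrightarrow \Sym(N) \simeq S_n$).

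Conversely, suppose there is a group $N$ of order $n$ and an embedding $G \hookrightarrow \Hol(N)$ such that this embedding, composed with $\Hol(N) \hookrightarrow \Sym(N) \simeq S_n$, realizes the action of $G$ on $G/G'$. Inside $\Hol(N) = N \rtimes \Aut(N)$ the subgroup $N$ (acting by left translations) is normal and acts regularly on the underlying set $N$; a fortiori $N$ is a regular subgroup of $S_n = \Sym(N)$ normalized by the image of $G$. Re-identifying $\Sym(N)$ with $\Sym(G/G')$ compatibly with the $G$-action (which is exactly the hypothesis on the embedding), $N$ becomes a regular subgroup of $\Sym(G/G')$ normalized by $\lambda(G)$, so Theorem \ref{defHG} gives that $K/k$ is Hopf Galois.

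The main thing requiring care — and the only real obstacle — is the bookkeeping in the identification $\Sym(G/G') \simeq \Sym(N)$: one must make sure that "the action of $G$ on $G/G'$" and "the embedding of $G$ into $\Hol(N) \subseteq \Sym(N)$" refer to conjugate (really, equal, after a relabeling) permutation representations, so that the regularity of $N$ and the normalization by $G$ transport correctly between the two pictures. This amounts to checking that a regular subgroup $N \leq \Sym(\Omega)$ together with a choice of base point in $\Omega$ canonically identifies $(\Omega, N)$ with $(N, N_{\text{left}})$, and that under this identification $N_{\Sym(\Omega)}(N)$ becomes $\Hol(N)$; everything else is then formal. I would also note explicitly that separability is used exactly once, to guarantee $\lambda$ is injective so that "$G \subseteq \Hol(N)$" is literally an embedding rather than merely a homomorphism with the correct image.
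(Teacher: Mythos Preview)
The paper does not actually supply a proof of Theorem~\ref{hol}; it simply records the statement as Childs' reformulation of Theorem~\ref{defHG} (citing \cite{Childs1}, Proposition~1) and later states Byott's more refined bijection (Theorem~\ref{Byott}) without proof either. So there is no ``paper's own proof'' to compare against beyond the implicit suggestion that it follows from Theorem~\ref{defHG}.

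Your argument is the standard one and is essentially correct: the key fact is that the normalizer in $\Sym(N)$ of the left-regular copy $\lambda(N)$ is exactly $\Hol(N)$, and everything else is the bookkeeping you describe. One small slip worth fixing: you write that the kernel of the conjugation map $N_{\Sym(N)}(\lambda(N))\to\Aut(N)$, i.e.\ the centralizer of $\lambda(N)$, ``equals $N$ itself''. In fact the centralizer of $\lambda(N)$ in $\Sym(N)$ is $\rho(N)$, the image of the \emph{right} regular representation, which coincides with $\lambda(N)$ only when $N$ is abelian. This does not damage the conclusion, since $|\rho(N)|=|N|$ and $\lambda(N)\rtimes\Aut(N)$ visibly sits inside the normalizer with the correct order, so $N_{\Sym(N)}(\lambda(N))=\Hol(N)$ still follows; but the sentence as written is inaccurate and should be corrected. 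With that adjustment, your deduction of Theorem~\ref{hol} from Theorem~\ref{defHG} is complete and is exactly the argument one finds in the literature.
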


As an example of the significance of this reformulation, let us assume that $K/k$ is a separable extension  of degree 6 with Galois group isomorphic to $S_4$. We consider $\Hol(C_6)$ and $\Hol(S_3)$.   Since the first one has order 12 and the second one has order 36, we immediately conclude that
$K/k$ is not Hopf Galois.

Theorem \ref{hol} might be considered as an ``algorithmic" procedure
to check if the separable extension $K/k$ is a Hopf Galois extension:
\begin{description}
\item[Step 0] Check if $G'$ has a normal complement $N$ in $G$. In that case, $K/k$ is almost classically Galois.

\item[Step 1] Let $N$ run through a system of representatives of isomorphism classes of groups of order $n$
\item[Step 2] Compute $\Hol(N)\subseteq S_n$
\item[Step 3] Check $G\subseteq \Hol(N)\subset \Sym(G/G')$ such that $G'\subset G$ is the stabilizer of a point.
\end{description}

In degrees $n\le 5$, Greither and Pareigis showed that all Hopf Galois extensions are either Galois or almost classically Galois and they completely describe the Hopf Galois character of $K/k$ according to the Galois group (or the degree) of $\wk/k$.
The above algorithmic procedure allowed us to go further and proceed with the small case $n=6$ to see
how the Hopf Galois property behaves according to the sixteen different Galois types \cite{CRV1}.
The results for $n=6$ are described in Section 4 below.

The search for new (and small) examples of Hopf Galois extensions which are not almost classically Galois or  extensions with small Galois group
not being Hopf Galois extensions led us to the study  of intermediate extensions, namely fields $F$ such that
$K\subset F\subset \wk$. If $K/k$ is Hopf Galois, since $\wk/k$ is Galois, we are dealing with extensions within
Hopf Galois extensions. If the starting extension $K/k$ is not Hopf Galois, we wonder how far we should go to achieve the Hopf Galois property.
None of these questions makes sense for the classical Galois property and are specific to the broader  context of Hopf Galois property.
In all the small degree cases we studied, we found out that when $K/k$ is already Hopf Galois, all the intermediate extensions $F/k$ are also Hopf Galois. See Section 5 for a summary of our results for $n=4,5,6.$
But in general, this is not always the case, as we prove in \cite{CRV1},
where we characterize the Hopf Galois property for intermediate extensions.

The group-theoretical description of a Hopf Galois extension  also provides an explicit description of the corresponding Hopf algebra: from $N$ we obtain the Hopf algebra $H=\wk[N]^G$ of $G$-fixed points in the group algebra $\wk[N]$, where $G$ acts on $\wk$ by field automorphisms and on
$N$ by conjugation inside $S_n$. This $H$ is a $\wk$-form of $\wk[N]$, that is $H \otimes \wk \simeq \wk[N]$.
As an example, in subsection \ref{root32} we describe in the above way, namely via descent, a Hopf algebra for the extension $\Q(\root 3 \of 2)/\Q$.
Analogous computations could be done for each of the Hopf Galois extensions considered in our work, in order to determine the attached Hopf algebra.

We complete this introduction with a review of some concepts involved in the definition and characterization of Hopf Galois extensions, as well as the
explicit examples we have mentioned before. In the following sections we address small degree extensions
and intermediate extensions.

\subsection{Normal Complements}

Our first check in a separable field extension is on the almost classically Galois property, where we should look for a normal complement of a certain
subgroup. In the case of extensions $K/k$ of degree 4, there is nothing else to be done: in each case, if $\wk$ is the normal closure and
$G'=\Gal(\wk/K)$, then $G'$ has normal complement $N$ in $G=\Gal(\wk/k)$.

\bigskip

\centerline{\bf Degree 4}

\begin{center}
\begin{tabular}{l | c | l|l }
$G$ & Size & $K/k$& $G',N$\\
\hline
\hline
$C_4$ & 4  & Galois &$1,G$\\
$V_4$ & 4  &  Galois&$1,G$\\
$D_{2\cdot 4}$ & 8  & almost classically Galois&$C_2,C_4$\\
$A_4$  &12  & almost classically Galois&$C_3,C_2\times C_2$\\
$S_4$  &24  & almost classically Galois&$S_3,C_2\times C_2$\\
\end{tabular}
\end{center}

\bigskip

We include here a couple of generic results, just to remark that
the family of Frobenius groups is the best suited for this
kind of considerations.

\begin{lemma}
Let us consider  a dihedral group
$$D_{2n}=\langle s,r| s^2=1,\ r^n=1,\ sr=r^{-1}s\rangle$$
and a subgroup $G'$ of order $2$. If $G'$ is not normal, then the cyclic subgroup  $N=\langle r\rangle$ is a normal complement of $G'$.
\end{lemma}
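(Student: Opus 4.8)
The plan is to verify that $N=\langle r\rangle$ satisfies the two defining properties of a normal complement of $G'$ in $D_{2n}$: namely that $N$ is normal in $D_{2n}$, that $N\cap G'=\{1\}$, and that $NG'=D_{2n}$ (equivalently $|N|\cdot|G'|=|D_{2n}|$). Normality of $N$ is immediate from the presentation: the relation $sr=r^{-1}s$ shows $srs^{-1}=r^{-1}\in N$, and conjugation by powers of $r$ obviously preserves $N$, so $N\trianglelefteq D_{2n}$ regardless of any hypothesis. The order count $|N|\cdot|G'|=n\cdot 2=2n=|D_{2n}|$ is automatic, so the entire content of the lemma reduces to showing $N\cap G'=\{1\}$.

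For the intersection, I would argue by contradiction. The subgroup $N=\langle r\rangle$ consists precisely of the ``rotations'' $\{1,r,r^2,\dots,r^{n-1}\}$, and every element of $D_{2n}\setminus N$ has the form $r^i s$ for some $i$, i.e. is a ``reflection''. Suppose $N\cap G'\neq\{1\}$. Since $|G'|=2$, this forces $G'\subseteq N$, so the unique nontrivial element of $G'$ is some $r^j$ with $(r^j)^2=1$. I would then split into cases according to the parity of $n$: if $n$ is odd, the only element of $\langle r\rangle$ of order dividing $2$ is $1$, giving an immediate contradiction; if $n$ is even, then $r^{n/2}$ is the unique element of order $2$ in $N$, hence $G'=\langle r^{n/2}\rangle$, which lies in the center of $D_{2n}$ and is therefore normal — contradicting the hypothesis that $G'$ is not normal. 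Either way we conclude $N\cap G'=\{1\}$, completing the proof.

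The only mildly delicate point is making sure the case analysis on the parity of $n$ is airtight, in particular correctly identifying that $r^{n/2}$ is central in $D_{2n}$ when $n$ is even (since $sr^{n/2}s^{-1}=r^{-n/2}=r^{n/2}$), so that a nontrivial $G'\subseteq N$ would have to be normal. I expect this to be the main, though quite modest, obstacle; everything else is a routine unwinding of the dihedral presentation. A cleaner alternative I might use instead: observe directly that if $G'\subseteq N$ then $G'$ is characteristic in the cyclic group $N$ (it is the unique subgroup of order $2$), hence normal in $D_{2n}$ since $N$ is normal — which dispatches both parities at once without separately invoking centrality.
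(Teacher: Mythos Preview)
Your proof is correct. The paper takes a slightly more direct route: it simply observes that the non-normal subgroups of $D_{2n}$ of order $2$ are exactly the $\langle r^i s\rangle$ for $0\le i\le n-1$, and each of these visibly has trivial intersection with $\langle r\rangle$. Your argument establishes the contrapositive --- any order-$2$ subgroup contained in $\langle r\rangle$ must be normal --- which is logically equivalent but requires the extra step of identifying $\langle r^{n/2}\rangle$ as central (or, in your alternative, as characteristic in the cyclic group $N$). The paper's version is essentially a one-liner; yours is a bit more elaborate, though the characteristic-subgroup variant you mention at the end is clean and handles both parities of $n$ uniformly.
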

\begin{proof}The subgroups of $D_{2n}$ of order 2 which are not normal are
$\langle r^is\rangle$, with $0\le i\le n-1$, and we have $\langle r^is\rangle\cap \langle r\rangle=1$ for all $i$.
\end{proof}
In fact, dihedral groups of order
$2n$, with $n$ odd, are Frobenius groups with complement of order $2$.
A Frobenius group is a transitive permutation group on a finite set, such that no non-trivial element fixes more than one point and some non-trivial element fixes a point. The Frobenius complement is the stabilizer of a point. Elements in no stabilizer
together with the identity element form a normal subgroup called the Frobenius kernel. The Frobenius group is the semidirect product of these two subgroups.
\begin{lemma}
Let $F$ be a Frobenius group. If $N$ is the Frobenius kernel and $G'$ is the Frobenius complement, then $N$ is a normal complement of $G'$ in $F$. Hence if $\widetilde{K}/k$ is a Galois extension with group $F$ and $K$ is the subfield of $\widetilde{K}$ fixed by $G'$, then $K/k$ is almost classically Galois.
\end{lemma}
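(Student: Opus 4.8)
The statement to prove asserts two things: first, that in a Frobenius group $F$ with kernel $N$ and complement $G'$, the kernel $N$ is a normal complement of $G'$; and second, the immediate field-theoretic consequence that the corresponding extension $K/k$ is almost classically Galois. The second part is essentially a direct invocation of Theorem~4.1 of Greither--Pareigis as quoted earlier in the excerpt (the characterization of almost classically Galois extensions via existence of a normal complement), together with the Galois correspondence identifying $G'=\Gal(\widetilde K/K)$ when $K$ is the fixed field of $G'$. So the mathematical content is entirely in the first part.

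For the first part, the plan is to unwind the definition of Frobenius group given just above the lemma. We are told $F$ acts transitively on a finite set, no non-trivial element fixes more than one point, and the complement $G'$ is the stabilizer of a point. The kernel $N$ is, by the stated definition, the set of elements lying in no point-stabilizer, together with the identity, and we are furthermore told (in the sentence defining these objects) that $N$ is a normal subgroup and that $F$ is the semidirect product $N\rtimes G'$. Granting all of this, the verification that $N$ is a \emph{normal complement} of $G'$ reduces to two checks: (i) $N\cap G'=1$, and (ii) $NG'=F$. Both follow from $F=N\rtimes G'$, but it is cleaner to see them directly: $N\cap G'=1$ because a non-identity element of $G'$ fixes the chosen point, hence is not in $N$; and $|N|=[F:G']$ by a counting argument (the non-identity elements of $F$ are partitioned into those in some conjugate of $G'$ and those in $N$), which combined with $N\cap G'=1$ forces $NG'=F$ on cardinality grounds.

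Thus the key steps, in order, are: (1) recall that a point-stabilizer in a Frobenius action has trivial intersection with the kernel $N$, so $N\cap G'=1$; (2) use the orbit-counting/Frobenius partition to compute $|N|=[F:G']$, so that $|N|\,|G'|=|F|$; (3) conclude $N$ is a complement of $G'$, and since we are given $N\trianglelefteq F$, it is a normal complement; (4) apply the earlier Greither--Pareigis theorem characterizing almost classically Galois extensions to conclude that $K/k$ is almost classically Galois, where $G=F=\Gal(\widetilde K/k)$ and $G'=\Gal(\widetilde K/K)$.

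Honestly, given how much structure is handed to us in the paragraph preceding the lemma (in particular the assertion that $F$ is the semidirect product of $N$ and $G'$), there is no real obstacle: the proof is a two-line observation that "semidirect product" literally means "normal complement." The only point requiring a word of care is making sure the $G'$ appearing abstractly as a Frobenius complement is genuinely conjugate to — indeed equal to, after choosing the right point — the subgroup $\Gal(\widetilde K/K)$ in the field-theoretic statement; this is handled by choosing $K$ to be the fixed field of that particular complement, exactly as the statement does. So the expected "hard part" is really just bookkeeping: citing the correct earlier result and matching up notation between the group-theoretic and field-theoretic halves.
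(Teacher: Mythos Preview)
Your proposal is correct and matches the paper's treatment: the paper gives no proof at all for this lemma, relying entirely on the preceding paragraph where it is asserted that the Frobenius kernel is normal and that $F$ is the semidirect product of kernel and complement. Your observation that ``semidirect product'' already means ``normal complement,'' together with the invocation of the Greither--Pareigis characterization, is exactly what the paper takes for granted; your additional counting argument for $|N|=[F:G']$ is more than the paper supplies.
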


The preceding lemma will be applied in Section 2 to the Frobenius groups of orders 20, 21, 42, 55 and 110.

In many small degree cases, we have a unique conjugacy class of transitive subgroups of $S_n$ isomorphic to $G$. Then we can work
in $\Hol(N)$ modulo isomorphism, which is usually much easier.

\begin{prop}
Let $K/k$ be a separable extension of degree $n$ and let $\wk/k$ be its Galois closure, $G=\Gal(\wk/k)$,
and $G'=\Gal(\wk/K)$. Assume that all transitive subgroups of $S_n$ isomorphic to $G$ are in the same conjugacy class.

Then, $K/k$ is a Hopf Galois extension if and only if there exists a regular subgroup $N$ of $S_n$ such that $\Hol(N)$
has a transitive subgroup $G_1$ isomorphic to $G$.
\end{prop}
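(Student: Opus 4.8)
The plan is to derive this proposition directly from Theorem \ref{hol} together with the hypothesis that all transitive subgroups of $S_n$ isomorphic to $G$ form a single conjugacy class. The forward implication is essentially immediate: if $K/k$ is Hopf Galois, then by Theorem \ref{hol} there is a group $N$ of order $n$ with $G \subseteq \Hol(N)$, where $G$ sits inside $S_n \simeq \Sym(N)$ via the coset action on $G/G'$. Realizing $N$ as a regular subgroup of $S_n$ via its left regular representation, we get precisely a regular subgroup $N$ of $S_n$ such that $\Hol(N) \subseteq S_n$ contains $G$ itself as a transitive subgroup (transitivity of $G$ comes from the minimality of the Galois closure, i.e.\ the coset action on $G/G'$ is faithful and transitive). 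So here one can even take $G_1 = G$.

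For the converse, suppose there is a regular subgroup $N$ of $S_n$ such that $\Hol(N)$ contains a transitive subgroup $G_1 \simeq G$. The goal is to produce a regular subgroup of $S_n$ normalized by (a conjugate of) $G$ in its coset representation, or equivalently to invoke Theorem \ref{hol} directly. The point is that inside $\Hol(N) = N \rtimes \Aut(N)$, the subgroup $N$ is normal, hence $N$ is a regular subgroup of $S_n$ normalized by $G_1$. Now $G_1$ is a transitive subgroup of $S_n \simeq \Sym(N)$ isomorphic to $G$; by the single-conjugacy-class hypothesis, $G_1$ is conjugate in $S_n$ to the image $\lambda(G)$ of $G$ under the coset representation on $G/G'$. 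Conjugating by the relevant element $\tau \in S_n$, we obtain $\tau N \tau^{-1}$, a regular subgroup of $S_n$ (conjugates of regular subgroups are regular) normalized by $\tau G_1 \tau^{-1} = \lambda(G)$. By Theorem \ref{defHG} (or equivalently by Theorem \ref{hol} applied to the group $\tau N \tau^{-1}$, which has order $n$), $K/k$ is Hopf Galois.

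The main subtlety to handle carefully is the matching of basepoints when applying the single-conjugacy-class hypothesis: a priori two conjugate transitive subgroups of $S_n$ need only be abstractly isomorphic, but the conjugating permutation $\tau$ automatically carries the point stabilizer of $G_1$ to a point stabilizer of $\lambda(G)$, and all point stabilizers of $\lambda(G)$ are conjugate (they correspond to the conjugates of $G'$). Since Theorem \ref{hol} only requires $G$ to embed into $\Hol(N)$ as a subgroup acting with a point stabilizer equal to $G'$ up to conjugacy — and any transitive action of $G$ of degree $n$ with trivial core has point stabilizers conjugate to $G'$ precisely when $K/k$ has Galois closure with group $G$ and $G' = \Gal(\wk/K)$ — one should either note that in the single-conjugacy-class situation all such actions coincide, or simply absorb this into the conjugation step above. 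This is the step I expect to require the most care; everything else is a routine unwinding of the regular/holomorph formalism already set up in the excerpt.
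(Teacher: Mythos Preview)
Your proof is correct and follows essentially the same route as the paper's: for the converse, the paper uses the single-conjugacy-class hypothesis to write $\lambda(G)=\sigma G_1\sigma^{-1}\subset\sigma\Hol(N)\sigma^{-1}=\Hol(\sigma N\sigma^{-1})$ with $\sigma N\sigma^{-1}$ regular, then invokes Theorem~\ref{hol} directly. Your basepoint worry is unnecessary: since the conjugation carries $G_1$ onto $\lambda(G)$ itself (not merely onto an abstractly isomorphic copy), the embedding of $G$ into $S_n$ is literally $\lambda$, and the stabilizer condition in Theorems~\ref{defHG}/\ref{hol} is automatic.
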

\begin{proof} Let $N$ be a regular subgroup of $S_n$ such that $\Hol(N)$
has a transitive subgroup $G_1$ isomorphic to $G$. We consider the embedding $\lambda: G\hookrightarrow S_n$. For some $\sigma\in S_n$ we have
$$
\lambda(G)=\sigma G_1 \sigma^{-1}\subset \sigma\Hol(N)\sigma^{-1}=\Hol(\sigma N\sigma^{-1}),
$$
and $\sigma N\sigma^{-1}$ is a regular subgroup of $S_n$.
\end{proof}

We list here some examples of holomorphs, many of them used in the small degree computations.
In any case, for reasonably small values of $n$, we can count on a software system like Magma to perform
explicit computations:
\begin{center}
\begin{tabular}{c|ccccccc}
$N$ & $C_2$& $C_3$& $C_4$& $C_2\times C_2$& $C_5$& $C_6$& $S_3$\\
\hline
$\Hol(N)$ & $C_2$& $S_3$& $D_{2\cdot 4}$& $S_4$& $F_{20}$& $D_{2\cdot 6}$& $S_3\times S_3$\\
\end{tabular}
\end{center}
where $F_{20}$ denotes the Frobenius group of order 20.

Although the holomorph of a group of order $n$ is smaller than $S_n$, it can also be a rather big group. For example,
$\Hol(C_2\times C_2\times C_2)$ has order 1344. Since $\Aut(C_2\times C_2\times C_2)\simeq \GL(3,2)=\SL(3,2)$, this
holomorph has a  simple subgroup of order 168 and it is not solvable. For the easiest group families the sizes of the holomorphs
are easily computed:
\begin{itemize}
\item $\Aut(C_n)\simeq C_{\varphi(n)}$, therefore $\Hol(C_n)$ is solvable and has order $n\varphi(n)$;
\item $\Aut(D_{2n})\simeq \operatorname{Aff}(\Z/n\Z)=\{ax+b \mid \gcd(a,n)=1\}$  and has order $n\varphi(n)$. Therefore,
$\Hol(D_{2n})$ has order $2n^2\varphi(n)$.
\end{itemize}

In both statements, $\varphi$ denotes the Euler function.


\subsection{Hopf algebras via descent}\label{root32}

Let us see how in practice from the regular group $N$ in Greither and Pareigis theorem we recover the Hopf algebra and the Hopf action appearing in the definition
of Hopf Galois structure.

We consider the extension $\Q(\root 3 \of 2)/\Q$. If we
denote $\alpha=\root 3\of 2$ and $\omega\in\bar\Q$ a primitive cubic root of unity, then $K=\Q(\alpha)$, its normal closure is $\widetilde{K}=\Q( \omega, \alpha)$ and $\{ 1, \alpha, \alpha^2, \omega,  \omega \alpha, \omega \alpha^2 \}$ is a basis for $\tilde K/\Q$. The Galois group is
$G \simeq S_3=\langle\tau, \sigma\rangle=\{\Id, \tau, \sigma, \sigma^2, \tau \sigma, \tau \sigma^2 \},$
with
$$\begin{array}{rl}
\tau:& \tilde K \rightarrow \tilde K \\
& \omega \mapsto \omega^2 \\
& \alpha \mapsto \alpha
\end{array}
\qquad \qquad
\begin{array}{rl}
\sigma:& \tilde K \rightarrow \tilde K \\
& \omega \mapsto \omega \\
& \alpha \mapsto \omega \alpha
\end{array}
$$
The Galois group $G'=\Gal(\tilde K/K)$ is the subgroup $G'=\langle \tau\rangle \simeq C_2$.
A left transversal for $S=G/G'$ is $x_1=\Id$, $x_2=\sigma$ and $x_3=\sigma^2$.
The left action of $G$ on $G/G'$ gives $\lambda: G \hookrightarrow B=\Sym(S) \simeq S_3$.
Therefore, $\lambda(G')$ is the stabilizer of $G'=\Id\, G'\in S$.  Using the above numbering for cosets, we obtain
$\lambda(\tau)=(2,3)$ and $\lambda(\sigma)=(1,2,3)$.

This extension $K/\Q$ is almost classically Galois: the subgroup $N=\langle \sigma\rangle$ is a normal
complement of  $G'$ in $G$. We identify $N$, $G'$ and $G$ with their images in $S_3$ under $\lambda$.
We consider
$$\tilde K[N]=\{ u_0 \,\Id+u_1 \,\sigma+ u_2 \,\sigma^2 \mid u_i\in \tilde K   \}.$$
and we look for the elements which are fixed under the $G$-action: action on $\tilde K$ as described above by
field automorphisms and action on $N$ by conjugation:
$$\begin{array}{lll}
\empty^\tau\! \Id=\Id, \quad &   \empty^\tau\! \sigma=\tau \sigma \tau=\sigma^2, \quad &  \empty^\tau\! \sigma^2=\tau \sigma^2 \tau=\sigma, \\
\empty^\sigma\! \Id=\Id,  & \empty^\sigma\! \sigma=\sigma \sigma \sigma^{-1}=\sigma, & \empty^\sigma\! \sigma^2=\sigma^2.\\
\end{array}$$
Therefore,
$$\begin{array}{l}
\tau(u_0 \Id+u_1 \sigma+ u_2 \sigma^2)=\tau(u_0)\Id+ \tau(u_1)  \sigma^2+ \tau(u_2) \sigma \\
\sigma(u_0 \Id+u_1 \sigma+ u_2 \sigma^2)=\sigma(u_0)\Id+ \sigma(u_1)  \sigma+ \sigma(u_2) \sigma^2,
\end{array}$$
and an element of $\tilde K[N]$ is fixed by  $G$ if, and only if,
$$\begin{array}{ll}
\tau(u_0)=u_0 \quad \tau(u_1)=u_2 \quad \tau(u_2)=u_1 \\
\sigma(u_0)=u_0 \quad  \sigma(u_1)=u_1 \quad \sigma(u_2)=u_2.
\end{array}
$$
This gives
$$
u_0 \in k \quad\mbox{ and }\quad u_1, u_2 \in \tilde K^{\langle \sigma\rangle}=k(\omega),
$$
and for $u_1=a+ b \omega$, with $a,b \in \Q$, we have $u_2=\tau(u_1)=a+b \omega^2$.
Putting all together, the corresponding  Hopf algebra for $K/\Q$ is
$$ \begin{array}{rcl}
H&=&\tilde K[N]^G=\{ u_0 \Id+ (a+b \omega) \sigma+(a+b \omega^2) \sigma^2 \mid u_0,a,b \in \Q  \}\\
&=&\langle \Id, \sigma+\sigma^2, \omega \sigma+ \omega^2 \sigma^2\rangle_{\Q}.\end{array}$$
This algebra is described in \cite{GP} as $\Q[c,s]/(3s^2+c^2-1,(2c+1)s,(2c+1)(c-1)).$

The Hopf action  $\mu: H \rightarrow \End_k(K)$  is given by $\mu(h)(x)=h \cdot x$. Explicitly,
if $h=h_0 \Id+ h_1(\sigma+\sigma^2)+h_2(\omega \sigma+ \omega^2 \sigma^2) \in H$ and
$x=a_0+a_1 \alpha+a_2 \alpha^2 \in K$,
then
$$\begin{array}{ll}
h \cdot x & = h_0 x+ h_1(\sigma(x)+ \sigma^2(x))+h_2(\omega \sigma(x)+ \omega^2\sigma^2(x))= \\
          &=a_0 (h_0+2 h_1-h_2)+a_1  (h_0-h_1-h_2)\,\alpha+a_2 (h_0-h_1+2 h_2)\,\alpha^2.
\end{array}$$


\section{Classifying Hopf Galois structures}\label{count}

The group theoretic description of Hopf Galois extensions given by Greither and Pareigis
showed that there exist non-trivial Hopf Galois structures for separable field extensions
and opened the question of counting and classifying Hopf Galois structures
for a given separable field extension.

The fact that $K/k$ is classically Galois, or almost classically Galois, does not mean that the strong form of the fundamental theorem of Galois theory holds for all Hopf Galois structures on $K/k$. The first such example is due to Greither and Pareigis for $K/k$ any classical Galois extension with non-abelian Galois group $G$. In this case, there is another way than $\lambda$ to embed $G$ inside $\Sym(G)$:
$$
\rho: G\hookrightarrow \Sym(G/G')=\Sym(G)
$$
with $\rho(\sigma)(\tau)=\tau\sigma^{-1}$. This is a regular embedding and $\rho(G)$ is normalized by $\lambda(G)$.
Therefore, if $K/k$ is a non-abelian Galois extension, then $\lambda(G)\ne \rho(G)$ and there are at least two different Hopf Galois structures, corresponding to the regular
subgroups $N_1=\lambda(G)$ and $N_2=\rho(G)$.
In fact, $\rho(G)$ correspond to the classical action of $G$ on $K$ and by considering $\lambda(G)$ one
gets the following result.

\begin{theorem}[\cite{GP} Theorem 5.3]\label{gal}
Any Galois extension $K/k$ can be endowed with an $H$-Galois structure such that there is a canonical bijection between
sub-Hopf algebras of $H$ and normal intermediate fields $k\subseteq E\subseteq K$.
\end{theorem}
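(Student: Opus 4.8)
The plan is to use the regular embedding $\lambda\colon G\hookrightarrow\Sym(G)$ (left translations) as the chosen Hopf Galois structure, and to identify explicitly the Hopf algebra $H$ attached to $N=\lambda(G)$ via the descent recipe recalled in Subsection \ref{root32}, namely $H=\wk[N]^G$ where $G=\Gal(K/k)$ acts on $\wk=K$ by field automorphisms and on $N$ by conjugation inside $\Sym(G)$. The first step is to observe that, since $K/k$ is Galois, $G'=1$, so the coset space $G/G'$ is just $G$ with $G$ acting by left multiplication; thus $\lambda(G)$ is a regular subgroup of $\Sym(G)$, and it is normalized by itself, so Theorem \ref{defHG} indeed produces an $H$-Hopf Galois structure on $K/k$ with this $N$.

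Next I would carry out the descent computation. Conjugation of $\lambda_h$ by $\lambda_g$ gives $\lambda_g\lambda_h\lambda_g^{-1}=\lambda_{ghg^{-1}}$, so the action of $G$ on $N=\{\lambda_h:h\in G\}$ is the conjugation action of $G$ on itself transported through $\lambda$. Hence an element $\sum_{h\in G}u_h\,\lambda_h\in K[N]$ lies in $H=K[N]^G$ exactly when $g(u_h)=u_{ghg^{-1}}$ for all $g,h\in G$. From this one reads off a $k$-basis of $H$ indexed by conjugacy classes of $G$: for each class $C$, pick a representative $h_0$, let $L_C=K^{Z_G(h_0)}$ be the fixed field of the centralizer, and the $C$-component of $H$ is $\{\sum_{h\in C}u_h\lambda_h : u_{gh_0g^{-1}}=g(u_{h_0}),\ u_{h_0}\in L_C\}$, a $k$-vector space of dimension $[L_C:k]=|C|$; summing over classes gives $\dim_k H=|G|=[K:k]$, as it must.

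Then I would extract the sub-Hopf algebras. A general principle (which can be checked directly here, or invoked from the structure of group-algebra forms) is that the sub-Hopf algebras of $H=K[N]^G$ correspond to the $G$-stable subgroups of $N$, i.e.\ to the normal subgroups of $G$: to a normal subgroup $M\trianglelefteq G$ one associates $H_M=K[M]^G\subseteq H$. Under the fundamental theorem (Theorem 1.1) this sub-Hopf algebra maps to the fixed field $K^{H_M}=\{x\in K:\mu(\lambda_h)(x)=x\ \forall h\in M\}$. Since for $N=\lambda(G)$ the Hopf action $\mu(\lambda_h)$ on $K$ is precisely the field automorphism $h$ itself, $K^{H_M}=K^M$ in the classical sense, and $M\mapsto K^M$ is the usual Galois bijection between subgroups of $G$ and intermediate fields. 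Restricting to normal $M$, and noting that $K^M/k$ is Galois iff $M\trianglelefteq G$, one obtains the asserted canonical bijection between sub-Hopf algebras of $H$ and normal intermediate fields $k\subseteq E\subseteq K$.

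The main obstacle is the claim that every $k$-sub-Hopf algebra of $H=K[N]^G$ arises as $K[M]^G$ for a $G$-stable subgroup $M$ of $N$; the injectivity and inclusion-reversing properties come for free from Theorem 1.1, and the image consisting of normal intermediate fields is clear once the sub-Hopf algebras are pinned down, but one must rule out ``exotic'' sub-Hopf algebras not defined over subgroups. I would handle this by base change: $H\otimes_k K\simeq K[N]$, sub-Hopf algebras of $K[N]$ over $K$ are exactly the $K[M]$ for subgroups $M\le N$ (a standard fact for group algebras over a field), and a sub-Hopf algebra of $H$ must become $G$-stable after base change, forcing $M$ to be $G$-stable; descending back recovers $H_M=K[M]^G$. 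This is the step where I expect to lean on \cite{GP} for the precise descent statement rather than reproving it here.
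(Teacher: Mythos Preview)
Your approach is correct and is exactly the one the paper indicates: it states (just before the theorem) that the result is obtained ``by considering $\lambda(G)$'' as the regular subgroup $N$, and your proposal simply fleshes out that sketch using the descent description $H=K[\lambda(G)]^G$ together with the bijection between sub-Hopf algebras of $H$ and $\lambda(G)$-stable subgroups of $N$ (which here are precisely the normal subgroups of $G$), as recorded in Section~3 of the paper. One small caveat: with the action formula used in the paper (see \S\ref{HGC4}), $\lambda_h$ acts on $K$ as the Galois automorphism $h^{-1}$ rather than $h$, but since $M=M^{-1}$ for a subgroup this does not affect your identification $K^{H_M}=K^{M}$.
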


In subsection \ref{HGC4} we show that non-unicity of Hopf Galois structures can be found already in a classical Galois extension $K/k$ with Galois group $G=C_2\times C_2$, the Klein 4-group.
In that example we see that the four different Hopf Galois structures provide four different images $Im{\mathcal F}_H$ inside the
lattice of subfields of $K/k$. And we only get surjectivity in the classical case. We have already mentioned that
we can have surjectivity for ${\mathcal F}$ for non-classical Hopf Galois structures: for example the almost classically Galois extensions.
The example in subsection \ref{ImF}  shows that we can also have different Hopf Galois structures with the same image for ${\mathcal F}$.

\subsection{Counting Hopf Galois structures}
The equivalent condition to the Hopf Galois property for separable field extensions given in Theorem \ref{defHG} gives a bijection between isomorphism classes of Hopf Galois structures on $K/k$ and regular subgroups $N$ of $S_n$ normalized by $G$ (see \cite{GP} Theorem 3.1). The following theorem makes more precise the  relationship between $G$ and $N$.

\begin{theorem}[Byott \cite{Byott96} Proposition 1] \label{Byott}Let $G$ be a finite group, $G'\subset G$ a subgroup and $\lambda:G\to \Sym(G/G')$ the homomorphism corresponding to
the action of $G$ on the left cosets $G/G'$. Let $N$ be a group of order $[G:G']$ with identity element $e_N$.
Then there is a bijection between
$$
{\mathcal N}=\{\alpha:N\hookrightarrow \Sym(G/G') \mbox{ such that }\alpha (N)\mbox{ is regular}\}
$$
and
$$
{\mathcal G}=\{\beta:G\hookrightarrow \Sym(N) \mbox{ such that }\beta (G')\mbox{ is the stabilizer of } e_N\}.
$$
Under this bijection, if $\alpha\in {\mathcal N}$ corresponds to $\beta\in {\mathcal G}$, then
$\alpha(N)$ is normalized by $\lambda(G)$ if and only if
$\beta(G)$ is contained in $\Hol(N)$.
\end{theorem}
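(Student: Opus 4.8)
The plan is to construct the bijection explicitly by exploiting the fact that both $\Sym(G/G')$ and $\Sym(N)$ are symmetric groups on sets of the same cardinality $n = [G:G']$, and that the relevant structure in each case is pinned down by a choice of "basepoint" and a regular subgroup. The key observation is that a regular subgroup $\alpha(N) \subseteq \Sym(G/G')$ gives the set $G/G'$ the structure of a principal homogeneous space (torsor) under $N$; choosing the coset $G' =: \omega_0$ as basepoint, we get a bijection $N \to G/G'$, $\nu \mapsto \alpha(\nu)(\omega_0)$. Conversely, an embedding $\beta: G \hookrightarrow \Sym(N)$ with $\beta(G')$ the stabilizer of $e_N$ means the $G$-action on $N$ (via $\beta$) is transitive with the orbit-stabilizer bijection $G/G' \to N$, $gG' \mapsto \beta(g)(e_N)$, well-defined exactly because $\beta(G')$ fixes $e_N$. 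So in both directions one has a distinguished bijection $b: G/G' \to N$ sending $\omega_0 \mapsto e_N$, and the content of the theorem is that the data $(\alpha, b)$ and $(\beta, b)$ determine each other.

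\medskip

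\noindent\textbf{Construction of the maps.} Given $\alpha \in \mathcal{N}$, define $b_\alpha: G/G' \to N$ by $b_\alpha(\alpha(\nu)(\omega_0)) = \nu$ (using regularity for well-definedness and bijectivity). Then define $\beta = \beta_\alpha: G \to \Sym(N)$ by transporting the $\lambda$-action along $b_\alpha$: $\beta(g) = b_\alpha \circ \lambda(g) \circ b_\alpha^{-1}$. I would then check: (i) $\beta$ is an injective homomorphism — immediate, being conjugation of the injective $\lambda$; (ii) $\beta(G')$ is the stabilizer of $e_N = b_\alpha(\omega_0)$ — because $\lambda(G')$ is the stabilizer of $\omega_0$ in $\Sym(G/G')$ and $b_\alpha$ sends $\omega_0$ to $e_N$. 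Conversely, given $\beta \in \mathcal{G}$, the orbit map $b_\beta: G/G' \to N$, $gG' \mapsto \beta(g)(e_N)$, is a well-defined bijection sending $\omega_0 \mapsto e_N$, and one transports $\beta$ back to get $\alpha = \alpha_\beta := b_\beta^{-1} \circ (\text{regular rep of } N?) \dots$ — here I must be slightly careful: $\alpha(N)$ must land as a \emph{regular} subgroup of $\Sym(G/G')$, so I define $\alpha(\nu) := b_\beta^{-1} \circ \rho_\nu \circ b_\beta$ where $\rho_\nu \in \Sym(N)$ is left (or right) translation by $\nu$; regularity of translations transports to regularity of $\alpha(N)$. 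Checking the two constructions are mutually inverse is then a matter of unwinding definitions, with the translation-action normalization chosen consistently so that $b_{\alpha_\beta} = b_\beta$ and vice versa.

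\medskip

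\noindent\textbf{The normalization statement.} This is the part requiring genuine computation, and I expect it to be the main obstacle — specifically, getting the group-action conventions (left versus right translations, and how conjugation interacts with them) lined up so the equivalence comes out clean rather than twisted. The claim is: $\lambda(G)$ normalizes $\alpha(N)$ inside $\Sym(G/G')$ $\iff$ $\beta(G) \subseteq \Hol(N) = N \rtimes \Aut(N)$ inside $\Sym(N)$. The heuristic: under the bijection $b_\alpha$, the subgroup $\alpha(N) \subseteq \Sym(G/G')$ corresponds to the group of translations of $N$, and the \emph{full normalizer} of the translation group in $\Sym(N)$ is precisely $\Hol(N)$ (this is the classical fact that $N_{\Sym(N)}(\lambda_N(N)) = \Hol(N)$). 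Since $b_\alpha$ conjugates $\alpha(N)$ to the translation group and conjugates $\lambda(G)$ to $\beta(G)$, we have $\lambda(G) \subseteq N_{\Sym(G/G')}(\alpha(N))$ iff $\beta(G) \subseteq N_{\Sym(N)}(\text{translations}) = \Hol(N)$. So the real work is: (a) record the lemma $N_{\Sym(N)}(\lambda_N(N)) = \Hol(N)$ (standard, provable by noting any permutation normalizing the regular $\lambda_N(N)$ and fixing $e_N$ must be an automorphism of $N$), and (b) verify that $b_\alpha$ (resp.\ $b_\beta$) really does intertwine the two pictures with no sign/side error. I would carry out (b) by a direct element chase: for $\nu \in N$ and $g \in G$, compute both $\lambda(g)\,\alpha(\nu)\,\lambda(g)^{-1}$ acting on an arbitrary coset and $\beta(g)\,\rho_\nu\,\beta(g)^{-1}$ acting on an arbitrary element of $N$, and match them via $b_\alpha$. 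Once the conventions are fixed, everything else is bookkeeping; the danger is purely that an inconsistent choice makes "normalizes" correspond to something slightly off from "$\subseteq \Hol(N)$", which is why pinning down the translation side (left action, antipode in the regular representation as in the $\rho$ defined earlier in the paper) up front is essential.
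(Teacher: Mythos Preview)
The paper does not supply its own proof of this theorem: it is quoted as Proposition~1 of Byott \cite{Byott96} and used as a black box, so there is nothing in the paper to compare your argument against line by line.

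That said, your sketch is correct and is essentially the standard argument (and, in outline, Byott's). The two key ingredients you identify are exactly the right ones: (i) a regular embedding $\alpha$ produces a canonical bijection $b_\alpha:G/G'\to N$ sending the basepoint $\omega_0=G'$ to $e_N$, along which one transports $\lambda$ to obtain $\beta$; and (ii) the classical fact that the normalizer in $\Sym(N)$ of the left-regular image of $N$ is precisely $\Hol(N)$, which immediately converts ``$\lambda(G)$ normalizes $\alpha(N)$'' into ``$\beta(G)\subseteq\Hol(N)$'' once everything has been conjugated by $b_\alpha$. Your caution about left/right conventions is well placed but not an obstruction: with the choice you make (left translation $\rho_\nu(x)=\nu x$), one checks directly that $b_{\alpha_\beta}=b_\beta$ and $b_{\beta_\alpha}=b_\alpha$, so the two constructions are mutually inverse.

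One small point worth making explicit in a write-up: your claim that $\beta=b_\alpha\circ\lambda\circ b_\alpha^{-1}$ is injective presupposes that $\lambda$ is injective, i.e.\ that $G'$ has trivial normal core in $G$. In the paper's setting (where $G=\Gal(\wk/k)$ and $G'=\Gal(\wk/K)$ for $\wk$ the Galois closure of $K/k$) this always holds, and in Byott's formulation it is part of the standing hypotheses; but since the theorem as stated here speaks of an arbitrary finite group $G$ and subgroup $G'$, you should either record this hypothesis or note that when the core is nontrivial both $\mathcal N$ and $\mathcal G$ are empty (the latter because a faithful $G$-action on $N$ with point-stabilizer $G'$ forces the core of $G'$ to be trivial), so the bijection holds vacuously.
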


To count Hopf Galois structures on a separable extension $K/k$ with normal closure $\wk$, and $G=\Gal(\wk/k)$, $G'=\Gal(\wk/K)$, we
seek regular subgroups of $\Sym(G/G')$ normalized by $\lambda(G)$. This counting is made more treatable by the following proposition, which is
a corollary of Theorem \ref{Byott} (see the sentence before Proposition 1 in \cite{Byott96}).

\begin{prop}
Let $K/k$ be a separable extension with normal closure $\wk$, and $G=\Gal(\wk/k)$, $G'=\Gal(\wk/K)$. Let
${\mathcal S}$ be the set of isomorphism classes of groups $N$ with $|N|=[G:G']$. The number
of Hopf Galois structures on $K/k$ is
$$
s(G,G')=\sum_{\{N\}\in{\mathcal S}} e(G,N)
$$
 $e(G,N)$ being the cardinality of the set of equivalence classes of embeddings $\beta$ of $G$ into $\Hol(N)$ such
that $\beta(G')$ is the stabilizer of $e_N$, modulo conjugation by elements of $\Aut(N)\subset \Hol(N)$.
\end{prop}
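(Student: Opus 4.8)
The plan is to derive this proposition as a bookkeeping consequence of Theorem \ref{defHG} together with Byott's Theorem \ref{Byott}. First I would recall that by Theorem \ref{defHG} (and the remark just before this proposition, citing \cite{GP} Theorem 3.1), the set of Hopf Galois structures on $K/k$ is in bijection with the set $\mathcal R$ of regular subgroups $N$ of $\Sym(G/G')$ that are normalized by $\lambda(G)$. So $s(G,G')=|\mathcal R|$, and the task is to show $|\mathcal R|=\sum_{\{N\}\in\mathcal S} e(G,N)$.

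Next I would partition $\mathcal R$ according to the isomorphism type of the regular subgroup. Every regular subgroup of $\Sym(G/G')$ has order $[G:G']$, so it is abstractly isomorphic to exactly one $N$ with $\{N\}\in\mathcal S$; write $\mathcal R_N$ for the subset of $\mathcal R$ consisting of those regular subgroups isomorphic to $N$. Then $|\mathcal R|=\sum_{\{N\}\in\mathcal S}|\mathcal R_N|$, and it suffices to prove $|\mathcal R_N|=e(G,N)$ for each fixed $N$. Here I would invoke Theorem \ref{Byott}: fixing a group $N$ of order $[G:G']$, Byott's bijection matches embeddings $\alpha\colon N\hookrightarrow\Sym(G/G')$ with regular image against embeddings $\beta\colon G\hookrightarrow\Sym(N)$ sending $G'$ to the stabilizer of $e_N$, and under this bijection $\alpha(N)$ is normalized by $\lambda(G)$ exactly when $\beta(G)\subseteq\Hol(N)$. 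So the subgroups counted by $\mathcal R_N$ are exactly the images $\alpha(N)$ for $\alpha$ in the subset $\mathcal N_0\subseteq\mathcal N$ of those $\alpha$ whose image is normalized by $\lambda(G)$, and these correspond bijectively to the subset $\mathcal G_0\subseteq\mathcal G$ of $\beta$ with $\beta(G)\subseteq\Hol(N)$.

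The one genuine point to check — and the main obstacle — is the passage from \emph{embeddings} to \emph{subgroups}, i.e.\ accounting for the fibers of $\alpha\mapsto\alpha(N)$ and of $\beta\mapsto\beta(G)$. Two embeddings $\alpha,\alpha'$ have the same regular image iff they differ by an automorphism of $N$ precomposed, i.e.\ $\alpha'=\alpha\circ\theta$ with $\theta\in\Aut(N)$; so $|\mathcal R_N|=|\mathcal N_0|/|\Aut(N)|$ (the action of $\Aut(N)$ on $\mathcal N_0$ is free because each $\alpha$ is injective). On the $\mathcal G$ side one must see that Byott's bijection $\mathcal N\leftrightarrow\mathcal G$ is equivariant for the natural $\Aut(N)$-actions: $\Aut(N)$ acts on $\mathcal N$ by $\theta\cdot\alpha=\alpha\circ\theta^{-1}$ and on $\mathcal G$ through the inclusion $\Aut(N)\subseteq\Hol(N)\subseteq\Sym(N)$ by conjugation $\theta\cdot\beta = \theta\,\beta(-)\,\theta^{-1}$; one checks from the explicit form of Byott's correspondence that it intertwines these, and that it carries $\mathcal N_0$ onto $\mathcal G_0$ (this last is literally the content of the final sentence of Theorem \ref{Byott}). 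Granting equivariance, $\mathcal N_0/\Aut(N)$ is in bijection with $\mathcal G_0/\Aut(N)$, and the latter is by definition $e(G,N)$. Since the $\Aut(N)$-action on $\mathcal G_0$ is also free (the $\beta$ are injective), we get $e(G,N)=|\mathcal G_0|/|\Aut(N)|=|\mathcal N_0|/|\Aut(N)|=|\mathcal R_N|$. Summing over $\{N\}\in\mathcal S$ yields $s(G,G')=\sum_{\{N\}\in\mathcal S}e(G,N)$, as claimed. The only thing requiring care beyond citation is verifying the $\Aut(N)$-equivariance of Byott's bijection, which is why I would keep the explicit description of that bijection from \cite{Byott96} at hand rather than treating Theorem \ref{Byott} as a black box.
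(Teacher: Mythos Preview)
Your argument is correct and follows exactly the route the paper indicates: the paper does not actually prove this proposition but simply states that it is a corollary of Theorem~\ref{Byott} (Byott's bijection), referring the reader to \cite{Byott96}. What you have written is a careful unpacking of that corollary---partitioning regular subgroups by isomorphism type, invoking Byott's bijection on each piece, and passing to $\Aut(N)$-orbits---and it is the standard way to extract the count from Theorem~\ref{Byott}.

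One small remark: your parenthetical justification ``the $\beta$ are injective'' for freeness of the $\Aut(N)$-action on $\mathcal G_0$ is not quite the right reason. Injectivity of $\beta$ alone does not prevent some $\theta\in\Aut(N)$ from centralizing $\beta(G)$. The correct argument is either (i) freeness on $\mathcal G_0$ is inherited from freeness on $\mathcal N_0$ via the equivariant bijection you already established, or (ii) directly: $\beta(G)$ is transitive on $N$ and any $\theta\in\Aut(N)$ fixes $e_N$, so if $\theta$ centralizes $\beta(G)$ it must fix every point of $N$. Either way your conclusion stands, and in fact you do not even need freeness on the $\mathcal G_0$ side once you have the equivariant bijection, since orbits correspond to orbits regardless.
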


The set  $\mathcal S$ parametrizes the types and $e(G,N)$ is the number of Hopf Galois structures \emph{of type $N$}: Hopf
Galois structures on $K/k$ with $k-$Hopf algebras $H$ such that $\wk\otimes H\simeq \wk[N]$.
Although the structure of the Hopf algebras acting on $K/k$ depends on
the extension, the question of how many Hopf Galois structures there are on a given $K/k$ depends only on
$G$ and $G'$. And much remains unknown on this question.

The first case to consider is $G'=1$, namely the case of classical Galois extensions, and a natural question is to characterize
when the classical structure is the unique one.  As it was mentioned above, all non-abelian
Galois extensions are examples of non-unicity of Hopf Galois structures.
We find unicity in a narrow class of abelian extensions.

\begin{prop}[Byott \cite{Byott96} Theorem 1]
A Galois extension $K/k$ with Galois group $G$ has a unique Hopf Galois structure if, and
only if, $n=|G|$ is a Burnside number, that is, $(n,\varphi(n))=1$, where  $\varphi$ denotes the Euler function. In particular,
all these extensions are cyclic.
\end{prop}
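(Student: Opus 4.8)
The plan is to work entirely inside the counting framework of the previous Proposition. Since $K/k$ is Galois we have $G'=1$, so the number of Hopf Galois structures on $K/k$ equals $s(G,1)=\sum_{\{N\}\in\mathcal S}e(G,N)$, the sum being over isomorphism classes of groups $N$ of order $n$, with $e(G,N)$ counting, up to $\Aut(N)$-conjugacy, the embeddings $\beta\colon G\hookrightarrow\Hol(N)$ with $\beta(G)$ regular; the classical structure lives in the term $N\cong G$, so $s(G,1)\ge1$ always. The first reduction is that $G$ must be abelian: as recalled in the paragraph preceding Theorem~\ref{gal}, for non-abelian $G$ the regular subgroups $\lambda(G)$ and $\rho(G)$ are distinct and both normalized by $\lambda(G)$, so $s(G,1)\ge2$. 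I will also use the classical fact that $(n,\varphi(n))=1$ holds if and only if every group of order $n$ is cyclic; the relevant implication follows by induction on $n$ (note $n$ is squarefree; apply Burnside's normal $p$-complement theorem at the smallest prime $p\mid n$, using $(n,p-1)=1$; then $G\cong C_{n/p}\rtimes C_p$ with $p\nmid\varphi(n/p)$ is cyclic). This fact also gives the ``in particular cyclic'' clause.

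For the direction $(n,\varphi(n))=1\Rightarrow$ uniqueness: then every group of order $n$ is cyclic, so $\mathcal S=\{C_n\}$, $G\cong C_n$, and $s(G,1)=e(C_n,C_n)$. Let $\pi\colon\Hol(C_n)=C_n\rtimes\Aut(C_n)\to\Aut(C_n)$ be the projection. Any subgroup $B\le\Hol(C_n)$ of order $n$ has $\pi(B)$ both a quotient of $B$ and a subgroup of $\Aut(C_n)$, so $|\pi(B)|$ divides both $n$ and $|\Aut(C_n)|=\varphi(n)$, hence $\pi(B)=1$ and $B=\ker\pi=C_n$. Thus $\Hol(C_n)$ has a unique subgroup of order $n$; the embeddings of $C_n$ onto it form a single $\Aut(C_n)$-orbit, so $e(C_n,C_n)=1$ and the only structure is the classical one.

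For the converse I would prove the contrapositive: if $(n,\varphi(n))>1$, exhibit a second structure. We may assume $G$ abelian; fix a prime $p\mid(n,\varphi(n))$. From $\varphi(n)=\prod_{\ell\mid n}\ell^{v_\ell(n)-1}(\ell-1)$, either (i) $n$ is squarefree and some prime $q\mid n$ satisfies $q\equiv1\pmod p$ — so $G\cong C_n$ — or (ii) $p^2\mid n$. In case (i), with $n=pqm$, take $N=C_m\times(C_q\rtimes C_p)$ with $C_p$ acting faithfully on $C_q$; this $N$ is non-abelian, hence $N\not\cong G$, so any structure of type $N$ is non-classical, and one finds a regular copy of $C_n$ inside $\Hol(N)=\Hol(C_m)\times\Hol(C_q\rtimes C_p)$ by taking the base copy of $C_m$ on the first factor and, on the second, $\langle a,x\rangle$ where $a\in C_q\subset C_q\rtimes C_p$ has order $q$ and $x=(b,\psi)\in\Hol(C_q\rtimes C_p)=(C_q\rtimes C_p)\rtimes\Aut(C_q\rtimes C_p)$ with $b$ of order $p$ and $\psi$ an order-$p$ automorphism chosen so that $\psi(a)=b^{-1}ab$; then $x^p=1$ and $xax^{-1}=a$ give $\langle a,x\rangle\cong C_{pq}$, and $\langle a,x\rangle\cap\Aut(C_q\rtimes C_p)=1$ makes it regular. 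In case (ii), split off the $p$-part $P$ of $G$ (order $p^a$, $a\ge2$) and produce a second regular subgroup $\cong P$ in some $\Hol(N_P)$ with $|N_P|=p^a$, then direct-product with the base copy of the $p'$-complement: for $p$ odd one stays with $N_P\cong P$ and twists the base copy by an automorphism of order $p$ (for instance $\langle(1,u)\rangle$ inside $\Hol(C_{p^a})$, with $u\in\Aut(C_{p^a})$ of order $p$); for $p=2$ with $P\in\{C_4,\ C_2\times C_2\}$ no ``type $P$'' second subgroup exists, and one instead changes the type, swapping a $C_4$ factor for $C_2\times C_2$ or vice versa, and reads off a regular $4$-cycle in $\Hol(C_2\times C_2)=S_4$ or a regular Klein subgroup in $\Hol(C_4)=D_{2\cdot4}$, which is non-classical because the type changed.

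The routine parts are the counting bookkeeping, the classical characterization of Burnside numbers, and the projection argument of the easy direction; the main obstacle is the converse, and within it the uniform construction of a second regular subgroup — concretely, verifying the ``twist by an automorphism of order $p$'' computation and disposing of the exceptional $p=2$ cases by a change of Hopf Galois type. I expect that case analysis to be the only delicate point of the proof.
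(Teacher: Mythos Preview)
The paper does not prove this proposition; it is quoted from Byott \cite{Byott96} without argument, so there is no ``paper's own proof'' to compare your attempt against. I can only evaluate the attempt on its own terms.

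Your reduction to abelian $G$, the projection argument for the Burnside direction, and case (i) of the converse are correct. Reading the ``$1$'' in $\langle(1,u)\rangle$ additively as the generator of $\Z/p^a\Z$, one checks for $p$ odd that $(1,u)^p=(p,1)$, so $\langle(1,u)\rangle$ is cyclic of order $p^a$, is regular, and is not $\Aut(C_{p^a})$-conjugate to the base copy; the squarefree case (i) also goes through once one notes that your automorphism $\psi$ can be taken to fix $b$, so the norm $b\,\psi(b)\cdots\psi^{p-1}(b)=b^p=1$ and indeed $x^p=1$.

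The genuine gap is in case (ii) when the $p$-Sylow $P$ of $G$ is \emph{not} cyclic --- a situation you cannot sidestep, since already $G=C_p\times C_p$ (or $C_p\times C_p$ times a group of coprime order) lands here. Your phrase ``twist the base copy by an automorphism of order $p$'' is not a construction outside the cyclic case: for non-cyclic $P$ no single element of $\Hol(P)$ has order $|P|$, so $\langle(1,u)\rangle$ is simply too small, and producing a second regular subgroup isomorphic to $P$ requires instead a nontrivial homomorphism $\beta\colon P\to\Aut(P)$ together with a compatible injective $1$-cocycle $\alpha\colon P\to P$ (equivalently, a nonzero commutative nilpotent subring of $\End(P)$). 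This can be done --- for $P=(C_p)^r$ a transvection $u=I+E_{12}$ yields the regular subgroup $\{(v,\,I+v_2E_{12}):v\in\F_p^{\,r}\}$, and a similar hand computation disposes of $C_4\times C_2$ --- but it is a different recipe from the cyclic one, and your sketch gives no indication of it. You correctly flag this case analysis as the delicate point; it is also the point at which the argument, as written, is incomplete.
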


We also find unicity of Hopf Galois structure in the case of extensions of prime degree.

\begin{prop}[Childs \cite{Childs1} Theorem 2] If $K/k$ is a separable field extension of prime degree, then
$$
K/k \mbox{ is Hopf -Galois } \iff \Gal(\wk/k) \mbox{ is solvable}.
$$
Besides, in this case $K/k$ is almost classically Galois and has a unique Hopf Galois structure.
\end{prop}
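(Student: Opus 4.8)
The plan is to run everything through the holomorph reformulation (Theorem~\ref{hol}) and the Greither--Pareigis criterion (Theorem~\ref{defHG}), exploiting the fact that for $n=p$ prime the only group of order $n$ is the cyclic group $C_p$. First I would record that $\Hol(C_p)=C_p\rtimes\Aut(C_p)=C_p\rtimes C_{p-1}=\mathrm{AGL}(1,p)$, the affine group of the line over $\F_p$, of order $p(p-1)$; in particular it is metacyclic, hence solvable. This disposes of the easy direction: if $K/k$ is Hopf Galois then by Theorem~\ref{hol} $G$ embeds into $\Hol(N)$ for some $N$ of order $p$, necessarily $N=C_p$, so $G$ is isomorphic to a subgroup of the solvable group $\Hol(C_p)$ and is therefore solvable.

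For the converse I would use the classical description (essentially due to Galois) of solvable transitive subgroups of $S_p$. Identify $G$ with $\lambda(G)\subseteq\Sym(G/G')\simeq S_p$, a transitive subgroup, and pick a minimal normal subgroup $M$ of $G$. Since $G$ is solvable, $M$ is elementary abelian, say a $q$-group. As $M\trianglelefteq G$ and $G$ is transitive, the $M$-orbits are permuted transitively by $G$, so they all have the same size, which divides $p$; this size cannot be $1$, for otherwise $M$ would lie in the kernel of the faithful action and be trivial. Hence $M$ is transitive, so $p\mid|M|$, forcing $q=p$, and since the exact power of $p$ dividing $p!$ is $p$ itself we get $|M|=p$. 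Thus $M$ is a cyclic regular subgroup of $S_p$ (generated by a $p$-cycle), contained in $G$ and normal in $G$. By Theorem~\ref{defHG} this already shows $K/k$ is Hopf Galois, and since moreover $M\subseteq G$ it is almost classically Galois: $M$ is the normal complement of $G'$, because $M\cap G'=1$ ($M$ acts regularly, so no nontrivial element of $M$ fixes a point) and $|M|\,|G'|=p\,|G'|=|G|$. Equivalently, one checks $N_{S_p}(M)=\Hol(M)\simeq\Hol(C_p)$ --- the centralizer of a $p$-cycle in $S_p$ is the cyclic group it generates, and the quotient by it realizes all of $\Aut(C_p)$ through the homotheties $x\mapsto ax$ --- whence $G\subseteq\Hol(C_p)$ directly.

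For uniqueness I would invoke the bijection between Hopf Galois structures on $K/k$ and regular subgroups of $S_p$ normalized by $G$ and show there is exactly one such subgroup. Any regular subgroup $N$ of $S_p$ has order $p$, hence is a Sylow $p$-subgroup of $S_p$. If $N$ is normalized by $G$, then $NG$ is a subgroup of $S_p$ in which $N$ is normal, hence is the unique Sylow $p$-subgroup of $NG$; since a Sylow $p$-subgroup of $G$ also has order $p$, it must coincide with $N$, so $N\subseteq G$ and $N$ is the unique normal Sylow $p$-subgroup of $G$, namely the group $M$ found above. Thus there is precisely one regular subgroup of $S_p$ normalized by $G$, so $K/k$ has a unique Hopf Galois structure.

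The main obstacle is the converse direction, and within it the purely group-theoretic input: establishing via the minimal-normal-subgroup argument that a solvable transitive subgroup of $S_p$ possesses a normal (hence regular) Sylow $p$-subgroup, and identifying $N_{S_p}(C_p)$ with $\Hol(C_p)$. Once that is in place, the Hopf Galois, almost classically Galois, and uniqueness assertions are all short translations through Theorems~\ref{defHG} and~\ref{hol} and the counting bijection.
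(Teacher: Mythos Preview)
The paper does not supply its own proof of this proposition: it is quoted verbatim from Childs \cite{Childs1} (Theorem~2) and then used to populate the tables for degrees $3,5,7,11$. So there is no in-paper argument to compare against.

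Your argument is correct. The forward implication is immediate from Theorem~\ref{hol} once one observes that the only group of order $p$ is $C_p$ and that $\Hol(C_p)\simeq\mathrm{AGL}(1,p)$ is metacyclic. For the converse, your minimal-normal-subgroup argument is the standard Galois route to the fact that a solvable transitive subgroup of $S_p$ contains a normal $p$-cycle; the key arithmetic input, that $p^2\nmid p!$, is exactly what pins $|M|$ down to $p$. The identification of $M$ as a regular subgroup contained in $G$ then feeds directly into the definition of almost classically Galois, and your verification that $M$ is a normal complement to $G'$ is clean. The uniqueness argument is also correct: any regular $N\subseteq S_p$ is a Sylow $p$-subgroup of $S_p$, and if $G$ normalises $N$ then $N$ is the unique Sylow $p$-subgroup of $NG$; since $p\mid|G|$ (transitivity) and $p\|\,|G|$ (again $p^2\nmid p!$), the Sylow $p$-subgroup of $G$ must coincide with $N$, forcing $N\subseteq G$ and hence $N=M$.

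This line of reasoning is essentially the one Childs gives in \cite{Childs1}, phrased through the holomorph reformulation that the present paper emphasises; there is no gap.
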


Therefore, we can completely classify the extensions of small prime degree.

\vspace{1cm}

\centerline{\bf Degree 3. $\mathbf{\mbox{Hol}(C_3)=S_3}$}
\begin{center}
\begin{tabular}{c|c|l|l}
$\Gal(\wk/k)$& Size&$K/k$ & $N=3$-Sylow of $G$\\
\hline
\hline
$C_3\simeq A_3 $ &3& Galois & $N=G$\\ [1ex]
$D_{2\cdot 3}\simeq S_3$ &6& almost classically Galois & $G'=\langle s\rangle$, $N=\langle r\rangle$\\ [1ex]
\end{tabular}
\end{center}

\bigskip

\centerline{\bf Degree 5. $\mathbf{\mbox{Hol}(C_5)=F_{20}}$}

\begin{center}
\begin{tabular}{c|c|l|l}
$\Gal(\wk/k)$& Size&$K/k$ & $N=5$-Sylow of $G$\\
\hline
\hline
$C_5 $ &5& Galois & $N=G$\\ [1ex]
$D_{2\cdot 5}$ &10& almost classically Galois & $G'=\langle s\rangle$, $N=\langle r\rangle$\\ [1ex]
$F_{20}$ &20& almost classically Galois& $G'=$ Frobenius complement \\
                      &  && $N\ =$ Frobenius kernel \\ [1ex]
$A_5$& 60&not Hopf Galois&\\
$S_5$ &120
& not Hopf Galois&\\
\end{tabular}
\end{center}

\bigskip

\centerline{\bf Degree 7. $\mathbf{\mbox{Hol}(C_7)=F_{42}}$}

\begin{center}
\begin{tabular}{c|c|l|l}
$\Gal(\wk/k)$& Size&$K/k$ & $N=7$-Sylow of $G$\\
\hline
\hline
$C_7 $ &7& Galois & $N=G$\\ [1ex]
$D_{2\cdot 7}$ &14& almost classically Galois & $G'=\langle s\rangle$, $N=\langle r\rangle$\\ [1ex]
$F_{21}$ &21& almost classically Galois& $G'=$ Frobenius complement \\
                      &  && $N\ =$ Frobenius kernel \\ [1ex]
$F_{42}$ &42& almost classically Galois & $G'=$ Frobenius complement \\
                      &  && $N\ =$ Frobenius kernel \\ [1ex]
$PSL(2, 7)$& 168&not Hopf Galois&\\
$A_7$& 2520&not Hopf Galois&\\
$S_7$ &5040
& not Hopf Galois&\\
\end{tabular}
\end{center}

\vspace{1cm}

\centerline{\bf Degree 11. $\mathbf{\mbox{Hol}(C_{11})=F_{110}}$}

\begin{center}
\begin{tabular}{c|l|l}
$\Gal(\wk/k)$& $K/k$ & $N=11$-Sylow of $G$\\
\hline
\hline
$C_{11}$ & Galois & $N=G$\\ [1ex]
$D _{2\cdot 11}$ & almost classically Galois & $G'=\langle s\rangle$, $N=\langle r\rangle$\\ [1ex]
$F_{55}$ & almost classically Galois& $G'=$ Frobenius complement \\
                      &  & $N\ =$ Frobenius kernel \\ [1ex]
$F_{110}$ & almost classically Galois & $G'=$ Frobenius complement \\
                      &  & $N\ =$ Frobenius kernel \\ [1ex]
$PSL(2, 11)$ & not Hopf Galois&\\
$M_{11}$& not Hopf Galois&\\
$A_{11}$& not Hopf Galois&\\
$S_{11}$ & not Hopf Galois&\\
\end{tabular}
\end{center}

\vspace{1cm}

When we consider a Hopf Galois extension of degree $n$, with $n$ a Burnside number, we know by \cite{Byott96} that the Galois group of
its normal closure must be solvable. 
But the converse is not true: in degree 15 there are extensions with Galois group of order 150, which is solvable; but such an extension
cannot be Hopf Galois, since $\Hol(C_{15})$ has order 120.
\bigskip

Back to the case of counting Hopf Galois structures, Kohl realizes a complete counting for a family of extensions of prime power degree.

\begin{theorem}[Kohl \cite{Kohl}]
Let $p$ be an odd prime, $n$ a positive integer and $k$ a field of characteristic $0$. Let $K=k(\alpha)$, where $X^{p^n}-a$ is the minimal polynomial of $\alpha$ over $k$, and let $r$ denote the largest integer between $0$ and $n$ such that $K$ contains a primitive $p^r$th root of unity.
\begin{enumerate}[(1)]
\item For $r<n$, there are $p^r$ Hopf Galois structures on $K/k$ for which the associated group $N$ is cyclic of order $p^n$. Of these, exactly $p^{min(r,n-r)}$ are almost classically Galois.
\item For $r=n$ (i.e. when $K/k$ is a cyclic extension of order $p^n$), there are $p^{n-1}$ Hopf Galois structures for which $N$ is cyclic of order $p^n$.  Of these, exactly one is almost classically Galois.
\end{enumerate}

\noindent In both cases, these are the only possible Hopf Galois structures on $K/k$.
\end{theorem}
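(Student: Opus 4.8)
The plan is to use Theorem~\ref{hol} together with Byott's Theorem~\ref{Byott} and the counting Proposition following it to convert the statement into a purely group-theoretic count of embeddings into the holomorph of a $p$-group, and then to settle that count by combining one structural lemma with an explicit $p$-adic computation. First I would describe $G=\Gal(\wk/k)$ and $G'=\Gal(\wk/K)$ concretely. Since $\wk=K(\zeta)$ with $\zeta$ a primitive $p^n$th root of unity, every element of $G$ sends $\alpha\mapsto\zeta^{a}\alpha$ and $\zeta\mapsto\zeta^{u}$, which exhibits $G$ as a subgroup of $\Hol(C_{p^n})=C_{p^n}\rtimes(\Z/p^n\Z)^{\times}$ that contains the translation subgroup $T\cong C_{p^n}$ (generated by $\alpha\mapsto\zeta\alpha$) and has $G'$ inside the complement $\Aut(C_{p^n})=(\Z/p^n\Z)^{\times}$ (the automorphisms fixing $\alpha$). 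Because $T$ is normal in $\Hol(C_{p^n})$, meets $G'$ trivially, and has order $[K:k]=[G:G']$, it is a normal complement of $G'$ in $G$; hence $K/k$ is almost classically Galois and the cyclic type $N\cong C_{p^n}$ always occurs, carrying at least the classical structure. The integer $r$ enters through $G'$: it says $G'$ fixes the $p^r$th roots of unity but not, when $r<n$, the $p^{r+1}$th ones, which for $1\le r<n$ pins down $G'=\langle 1+p^{r}\rangle\cong C_{p^{n-r}}$ and $G\cong C_{p^n}\rtimes C_{p^{n-r}}$; for $r=n$ one gets $G'=1$ and $G=C_{p^n}$; and $r=0$ (where $G'$ is not a $p$-group and genuinely depends on $k$) is handled by the same computations, the point being that the final count does not see this dependence.

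Next I would show that no non-cyclic type occurs. Suppose $N$ has order $p^n$ and $\beta\colon G\hookrightarrow\Hol(N)$ satisfies the hypothesis of Theorem~\ref{Byott} that $\beta(G')$ is the stabiliser of $e_N$. Restricting to $T$, injectivity of $\beta$ gives $\beta(T)\cap\beta(G')=\beta(T\cap G')=1$, so the orbit of $e_N$ under $\beta(T)$ has full size $p^n$ and $\beta(T)$ is a regular subgroup of $\Sym(N)$ isomorphic to $C_{p^n}$; in particular $\Hol(N)$ contains an element of order $p^n=|N|$. The key lemma is that for $p$ odd this forces $N$ to be cyclic. I would prove it by passing to a Sylow $p$-subgroup $N\rtimes P$ of $\Hol(N)$ (with $P$ a Sylow $p$-subgroup of $\Aut(N)$) and bounding its exponent: taking a putative element of order $p^n$ and reducing via the Frattini quotient of $N$ turns the question into an exponent estimate for Sylow $p$-subgroups of affine groups over $\F_p$, the one delicate case (two generators) relying on $\binom{p}{2}\equiv 0\ \mathrm{mod}\ p$. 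This is exactly where oddness of $p$ is essential: for $p=2$, $\Hol(C_2\times C_2)\cong S_4$ does contain elements of order $4$, and the type $C_2\times C_2$ genuinely occurs (cf.\ the degree $4$ table).

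It then remains to count, inside $\Hol(C_{p^n})=C_{p^n}\rtimes(\Z/p^n\Z)^{\times}$, the regular subgroups $N_0\cong C_{p^n}$ normalised by $\lambda(G)$ --- equivalently, via the Proposition, the embeddings $\beta$ with $\beta(G')\subseteq(\Z/p^n\Z)^{\times}$, modulo conjugation by $\Aut(C_{p^n})$ --- and to single out the almost classically Galois ones. Writing $G=\langle z\rangle\rtimes G'$, one notes that $\beta$ is determined by $\beta(z)=(a,u)$ and $\beta|_{G'}$, translates the relation $z\mapsto z^{1+p^{r}}$ under $G'$ into congruences on $(a,u)$ modulo powers of $p$, and quotients by the $\Aut(C_{p^n})$-action, which rescales $a$ by units; the bookkeeping returns $p^{r}$ classes when $r<n$ and $p^{n-1}$ when $r=n$. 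Finally, a structure is almost classically Galois exactly when $N_0\subseteq\lambda(G)$, which under Byott's correspondence becomes the requirement that $\beta(G)$ contain the translation subgroup $T$ of $\Hol(C_{p^n})$; since that containment pins $\beta(G)$ down, it selects those $\beta$ whose parameter satisfies valuation constraints from \emph{both} sides --- bounded by $p^{r}$ through the action of $G'$ and by $p^{n-r}$ through the order of $G'$ --- yielding $p^{\min(r,n-r)}$ classes (and just one when $r=n$, where $N_0=\lambda(G)$ is forced). I expect the main obstacle to be this last step: carrying out the count while correctly handling the $\Aut(C_{p^n})$-conjugacy identifications together with the simultaneous two-sided $p$-adic constraints. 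The cleanest route is to express everything through the arithmetic of $1+p^{r}$ and its powers in $(\Z/p^n\Z)^{\times}$.
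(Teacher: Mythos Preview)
The paper does not prove this theorem: it is quoted with attribution to Kohl \cite{Kohl} as one of several counting results surveyed in Section~\ref{count}, and no argument is supplied. There is therefore no proof in the paper to compare your proposal against.

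For what it is worth, your outline is essentially the strategy of Kohl's original paper. The two structural pillars are exactly the ones you isolate: (i) the exponent lemma that for $p$ odd a non-cyclic $p$-group $N$ of order $p^{n}$ has no element of order $p^{n}$ in $\Hol(N)$, which rules out all non-cyclic types once one observes that $\beta(T)$ is a regular cyclic subgroup; and (ii) the explicit parametrisation of regular cyclic subgroups of $\Hol(C_{p^{n}})$ normalised by $\lambda(G)$, carried out in terms of the arithmetic of $1+p^{r}$ in $(\Z/p^{n}\Z)^{\times}$. Your remark that oddness enters through $\binom{p}{2}\equiv 0\pmod p$, and your illustration that the lemma fails at $p=2$ via $\Hol(C_2\times C_2)\cong S_4$, are both on point.

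One place to be careful in execution: your formula $G'=\langle 1+p^{r}\rangle$ is only valid for $1\le r<n$. When $r=0$ the group $G'$ is a subgroup of $(\Z/p^{n}\Z)^{\times}$ that surjects onto $(\Z/p\Z)^{\times}$ and genuinely depends on $k$; you acknowledge this, but the claim that ``the final count does not see this dependence'' requires a separate (short) argument showing that the prime-to-$p$ part of $G'$ forces the unit parameter $u$ to be trivial, leaving only the single almost classically Galois structure. This is not hard, but it is a case that must be written out rather than absorbed into the $r\ge 1$ computation.
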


For classical Galois extensions, the excluded case $p = 2$ is treated in \cite{Byott07}, where it is proved that a cyclic Galois extension of degree
$2^n$, $n\ge 3$,  admits  $3 \cdot 2^{n-2}$ Hopf Galois structures. They are equally distributed among the
three possible types: $N$ can be the cyclic group $C_{2^n}$ , the dihedral group $D_{2^n}$,
or the generalized quaternion group $Q_{2^n}$ and  the almost classically Galois structures are of cyclic type.
Byott also proves:
\begin{itemize}
\item[-] for a Galois extension of degree $p^2$, there are exactly $p$ distinct Hopf Galois structures if the Galois group is cyclic and $p^2$ if the Galois group is
elementary abelian \cite{Byott96}
\item[-] for a Galois extension of degree $pq$, where $p, q$ are primes and $p \equiv 1 \bmod q$, there are $2q - 1$, respectively $2+p(2q - 3)$, Hopf
Galois structures when the extension is cyclic, respectively non-abelian \cite{Byott04}. For example, this gives 5 distinct
Hopf Galois structures for a Galois extension with Galois group $S_3$.
\item[-] for a Galois extension whose Galois group is a non-abelian simple group there are exactly two different Hopf Galois structures \cite{Byott042}.
\end{itemize}

Other cases where the classification has been addressed are
Galois extensions of order $4p$, where $p$ is an odd prime \cite{Kohl07},
Galois extensions with groups  $G$ that are semidirect products of cyclic groups and have trivial centers \cite{Ch-Corr} or
Galois extensions or order $mp$, where $p$ is prime and $m < p$ \cite{Ko13}. A non-unicity result for abelian extensions is given in \cite{By-Chi}, namely that every finite Galois field extension with abelian group of even order $>4$ admits a Hopf Galois structure for which the associated group $N$ is non-abelian.


\subsection{Non-unicity of Hopf Galois structures in a degree 4 abelian extension}\label{HGC4}

The non-unicity of Hopf Galois structures appears already  for Galois extensions of degree 4.
If the Galois group is cyclic, there are 2 distinct structures, and
if the Galois group is elementary abelian there are 4 distinct structures. Let us show this case in detail.

Let $k$ be a field of characteristic $\ne 2$ and $K/ k$ a Galois extension with Galois group $G$ isomorphic to the Klein group:
$$G \simeq C_2 \times C_2=\langle\sigma, \tau \rangle=\{\Id, \sigma, \tau, \sigma \tau= \tau \sigma \}.$$
We can write $K=k(\sqrt{a}, \sqrt{b})$ with $a,b,ab\in k^*\setminus k^{*2}$. Then,  $\{1, \sqrt{a}, \sqrt{b}, \sqrt{ab}\}$ is a $k-$basis of $K$ and the Galois action
is given by
$$\begin{array}{rcrcrc}
\sigma:& K \rightarrow K   & \qquad \tau:& K \rightarrow K &  \qquad \sigma \tau:& K \rightarrow K \\
& \sqrt{a} \mapsto -\sqrt{a} & & \sqrt{a} \mapsto \sqrt{a} && \sqrt{a} \mapsto -\sqrt{a} \\
& \sqrt{b} \mapsto \sqrt{b} & & \sqrt{b} \mapsto -\sqrt{b} && \sqrt{b} \mapsto -\sqrt{b}.
\end{array}$$
If we consider the regular representation $\lambda: G\hookrightarrow \Sym(G)$ and we take, for example, the enumeration
$x_1=\Id,\ x_2=\sigma,\ x_3= \tau, \ x_4=\sigma \tau$, then
$ \lambda(\sigma)=(1,2)(3,4)$ and $\lambda(\tau)=(1,3)(2,4)$ in $S_4$. This identifies $G$ with a regular subgroup of $S_4$, the subgroup $V_4$ formed by
the identity and the three products of two disjoint transpositions.

Now, to look for the different Hopf Galois structures of $K/k$ we should look for regular subgroups $N$ of $S_4$ normalized by $V_4$.
Taking $N=V_4$ we get the classical Galois structure, which corresponds to the group algebra $k[G]$. But the three
cyclic subgroups of order 4
$$
N_1=\langle(1,2,3,4)\rangle, \qquad N_2=\langle(1,3,2,4)\rangle, \qquad N_3=\langle(1,4,2,3)\rangle
$$
are also normalized by $V_4$. Altogether, these are the 4 different Hopf Galois structures for $K/k$. We describe the
Hopf algebra corresponding to $N_1$ and  the correspondence $\mathcal F$ of the main theorem in this case.

If we let  $g_1=(1,2,3,4)$, then the Hopf algebra is
$$H_{1}=K[N_1]^G=\{z=a_0 \Id+a_1 g_1+a_2 g_1^2+a_3 g_1^3 \in K[N_1] \mid  \, \empty^g\! z =z, \forall g \in G \}.$$
Since,
$$\begin{array}{l}
 \empty^\sigma\! g_1=\lambda(\sigma) g_1 \lambda(\sigma^{-1})=(1,2)(3,4)(1,2,3,4)(1,2)(3,4)=(1,4,3,2)=g_1^3,\\
 \empty^\tau\! g_1=\lambda(\tau) g_1 \lambda(\tau^{-1})=(1,3)(2,4)(1,2,3,4)(1,3)(2,4)=(1,2,3,4)=g_1,\\
 \end{array}$$
we obtain $z \in H_{1}$ if, and only if,
$$
\sigma(a_0)=a_0, \, \,  \sigma(a_1)=a_3, \, \,  \sigma(a_2)=a_2, \, \,  \sigma(a_3)=a_1, \, \,  \tau(a_i)=a_i.$$
This gives $a_0,a_2 \in k$ and $a_1 \in k(\sqrt{a})$. Furthermore, if $a_1=x_0+x_1 \sqrt{a}$, \,  $x_i \in k$, then  $a_3=\sigma(a_1)=x_0-x_1 \sqrt{a}$.
Therefore, the Hopf algebra is
$$H_{1}=K[N_1]^G=\langle 1, \, \,  g_1^2,\, \,  g_1+g_1^3, \, \, \sqrt{a}(g_1-g_1^3)\,\rangle.$$
Since the Hopf action $\mu_{1}: H_{1} \rightarrow \End_k(K)$ is given by
$$ (\sum_{n \in N_1}c_n n)\, x=\sum_{n \in N_1}c_n(n^{-1}(1_G))\, x.$$
and
$
\Id^{-1}(1_G)=\Id, \,  \, g_1^{-1}(1_G)= \sigma \tau, \, \, (g_1^2)^{-1}(1_G)= \tau,\, \, (g_1^3)^{-1}(1_G)= \sigma,$
we have the Hopf action $$\mu_1(a_0+ a_1 g_1+a_2 g_1^2+a_3 g_1^3)(x)= a_0 x+ a_1 \sigma \tau(x)+ a_2 \tau(x)+a_3 \sigma(x).$$

Corresponding to the unique subgroup $\langle g_1^2\rangle $ of $N_1$, the algebra $H_1$ has the unique sub-Hopf algebra,
 $F=k[\langle 1, g_1^2\rangle]=K[\langle g_1^2\rangle]^G$, which is 2-dimensional. The fixed subfield is
$$ \begin{array}{rl}
K^{F}&= \{ x=x_0+x_1 \sqrt{a}+x_2 \sqrt{b}+x_3 \sqrt{ab} \in K \mid  \mu_{1}(h)(x)=\varepsilon (h) x , \, \forall h \in F \}=\\
   &= \{ x \in K \mid  \mu_{1}(g_1^3)(x)= \tau(x)= x \}\\
   &= \{ x \in K \mid  x_0+x_1 \sqrt{a}-x_2 \sqrt{b}-x_3 \sqrt{ab}= x \}.\end{array}$$
Therefore,  $x_2=x_3=0$ and $x=x_0+x_1 \sqrt{a}$. Namely, $K^{F}=k(\sqrt{a}).$

In this example, the Hopf Galois structure provided by $N_1$ is not classical or almost classically Galois and the main theorem
does not hold in its strong form, the sub-Hopf algebras only provide a portion of the subfield lattice: the image of ${\mathcal F}_{H_1}$ is
$$\begin{array}{c}
 K\\
 \mid \\
 K^F=k(\sqrt{a}) \\
 \mid \\
 k
\end{array}
$$
The remaining portions of the subfield lattice are obtained analogously through $N_2$ and $N_3$. And, of course, the classical structure
reflects the whole lattice.


\section{The lattice of sub-Hopf algebras and the main theorem}

The main theorem concerns sub-Hopf algebras of the Hopf algebra attached to the Hopf Galois structure under consideration.
Since in the separable case all these algebras are forms of a certain group algebra, the group algebra $\wk[G]$,
we start by recalling that the sub-Hopf algebras of the group algebra correspond to subgroups of $G$.

\begin{prop}(\cite{CRV2})
Let $k$ be a field and $G$ a finite group. The sub-Hopf algebras of  $k[G]$ are the group algebras
 $k[T]$, with $T$ a subgroup of  $G$.
\end{prop}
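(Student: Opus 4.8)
The plan is to show the two inclusions: every group algebra $k[T]$ with $T \subseteq G$ is a sub-Hopf algebra of $k[G]$, and conversely every sub-Hopf algebra of $k[G]$ has this form. The first direction is immediate: if $T$ is a subgroup of $G$, then the $k$-span of $T$ inside $k[G]$ is closed under multiplication and contains $1$, so it is a subalgebra; moreover the comultiplication $\Delta(g) = g \otimes g$, counit $\epsilon(g) = 1$ and antipode $S(g) = g^{-1}$ all send elements of $T$ into the appropriate tensor powers or copies of $k[T]$ (using that $T$ is closed under inverses), so $k[T]$ is a sub-Hopf algebra.

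For the converse, let $H' \subseteq k[G]$ be a sub-Hopf algebra. The key structural fact to invoke is that the group elements $g \in G$, viewed inside $k[G]$, are exactly the \emph{grouplike} elements of the Hopf algebra $k[G]$, i.e. the nonzero elements $x$ with $\Delta(x) = x \otimes x$; this is a standard lemma (grouplike elements of a Hopf algebra are linearly independent, and in $k[G]$ writing $x = \sum_g c_g g$ and comparing $\Delta(x) = x \otimes x$ coefficientwise forces $x$ to be a single group element). First I would record this, so that the grouplike elements of the sub-Hopf algebra $H'$ form a set $T \subseteq G$. Since $H'$ is closed under $\Delta$, $\epsilon$ and $S$, and since a product of grouplikes is grouplike and $S$ sends grouplikes to their inverses, $T$ is closed under multiplication and inverses and contains the identity, hence $T$ is a subgroup of $G$.

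It then remains to see that $H' = k[T]$, i.e. that $H'$ is spanned as a $k$-vector space by its grouplike elements. One inclusion is clear since $T \subseteq H'$. For the other, pick $x \in H'$ and write it in the canonical basis, $x = \sum_{g \in G} c_g g$ with $c_g \in k$; I must show $c_g \neq 0$ implies $g \in T$. The cleanest route is a projection/idempotent argument in the dual, or more elementarily: the condition $\Delta(H') \subseteq H' \otimes H'$ together with the fact that $\{g \otimes g : g \in G\}$ are part of a basis of $k[G] \otimes k[G]$ lets one show that for each $g$ with $c_g \neq 0$, the "component" $g$ itself lies in $H'$ — for instance, applying a suitable $k$-linear functional to one tensor factor of $\Delta(x)$ produces a scalar multiple of $g$ lying in $H'$, whence $g \in H'$ is grouplike and so $g \in T$. (Alternatively, one uses that $k[G]^*$ acts on $k[G]$ and the dual basis functional $e_g$ satisfies $e_g \rightharpoonup x = c_g g$, and a sub-Hopf algebra is stable under this action.)

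The main obstacle is precisely this last step: identifying that a sub-Hopf algebra is automatically the span of its grouplikes, rather than some more exotic subspace — one needs to use the coalgebra structure (closure under $\Delta$), not just the algebra structure, and phrase the "extract the $g$-component and see it lies in $H'$" argument carefully so that it genuinely only uses that $H'$ is a subcoalgebra. Everything else (grouplikes of $k[G]$ are exactly $G$; $T$ is a subgroup) is routine Hopf-algebra bookkeeping. Note that cocommutativity of $k[G]$ is not really needed here; the statement holds for any field $k$ and any finite group $G$.
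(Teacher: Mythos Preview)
Your argument is correct. The paper does not actually supply a proof of this proposition: it is stated with a citation to \cite{CRV2}, and the only indication of the method is the remark immediately following it, that ``the set of group-like elements of a group algebra $k[T]$ is precisely $T$''. This is exactly the structural fact you build your proof on, so your approach is in line with what the paper suggests.

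Your handling of the nontrivial direction --- showing that a sub-Hopf algebra $H'$ is spanned by its grouplikes --- is the right one and is carried out correctly: for $x=\sum_g c_g g\in H'$ one has $\Delta(x)=\sum_g c_g\,g\otimes g\in H'\otimes H'$, and applying $e_h\otimes\mathrm{id}$ (with $e_h\in k[G]^*$ the dual basis functional) to any element of $H'\otimes H'$ lands in $H'$, yielding $c_h h\in H'$. The only cosmetic point is that you describe this step somewhat tentatively (``the cleanest route\ldots or more elementarily\ldots''); in fact the functional argument you sketch is already complete and requires no further justification, so you could state it outright rather than hedging.
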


Let us remark that  the set of group-like elements of a group algebra $k[T]$ is precisely $T$ and therefore from different subgroups we obtain different
subalgebras.
Therefore, with the notation we have been using for separable Hopf Galois extensions, for a subgroup $N$ providing
a Hopf Galois structure in $K/k$,
$$
\begin{array}{rcl}
\{N' \mbox{ subgroup of }N \} &\longrightarrow& \{H'\subseteq \wk[N] \mbox{  sub-Hopf algebra}\}\\
N'&\to& \wk[N']
\end{array}
$$
is a one-to-one correspondence. If we consider $H=\wk[N]^G$, the Hopf algebra obtained from $\wk[N]$ via descent, we can define
$$
\begin{array}{rcl}
\{N' \mbox{ subgroup of }N \} &\longrightarrow& \{H'\subseteq H \mbox{  sub-Hopf algebra}\}\\
N'&\to& \wk[N']^G
\end{array}
$$
where $\wk[N']^G=\wk[N']\cap \wk[N]^G$. If $N'$ is stable under the conjugacy action of $\lambda(G)$ in $N$, then $G$ acts in $\wk[N']$ and we are considering the fixed points.

The above map is surjective because for a given $H'$ by extension of  scalars we obtain a sub-Hopf algebra of $\wk[N]$ and via descent we
recover $H'$.
We do not have injectivity in general. For example, in some cases (see the next subsection) we get $\wk[N']^G=k=\wk[\{1\}]^G$ for a non-trivial $N'$.
But if all the subgroups of $N$ are stable under conjugation by elements of $\lambda(G)$ (as happened in the examples of the previous subsection) then
we have injectivity.
In fact, since $N'$ and the stable subgroup $\bigcap\limits_{\sigma \in G} \lambda(\sigma) N'\lambda(\sigma)^{-1}$ give rise to the same sub-Hopf algebra, we have that
sub-Hopf algebras of $H$ are in bijection with subgroups of $N$ stable under the action of $\lambda(G)$ (see \cite{CRV2}).
Therefore, the  main theorem admits also a group-theoretical reformulation:
$$
\begin{array}{rcl}
{\mathcal F}_N:\{\mbox{Subgroups }N' \subseteq N \mbox{ stable under }\lambda(G)\} &\longrightarrow&\{\mbox{Fields }E\mid k\subseteq E\subseteq K\}\\
N'&\to &K^{\wk[N']^G}
\end{array}
$$
is injective and inclusion reversing. For every Hopf Galois structure on $K/k$ we may define a map $\mathcal F$ and all of them have image in the same
set, the lattice of subfields of $K/k$

\subsection{The image of  ${\mathcal F}$}\label{ImF}

In the previous section we have seen that in the case of biquadratic extensions of  Hopf Galois type $N\simeq C_4$ we do not get surjectivity, that is,
the main theorem does not hold in its strong form. In that example, the four distinct Hopf Galois structures gave rise to different images of the
corresponding $\mathcal F$.  We have a similar situation when we analyze the five distinct Hopf Galois structures of a Galois extension with Galois
group $S_3$. Since it is non-abelian, we have the classical structure giving surjective $\mathcal F$ and another structure of type $S_3$ such that the image
of $\mathcal F$ is the set of normal intermediate fields, that is $k\subset F \subset K$, with $[F:k]=2$. The remaining three Hopf Galois structures  are
of cyclic type and the images of the corresponding $\mathcal F$ describe each  of the cubic subfields (see \cite{CRV2}, where the whole family of dihedral groups $D_{2p}$, $p$ an odd prime, is treated).

But we can have different Hopf Galois structures giving the same image for $\mathcal F$. For example, if both structures are almost classically Galois then
both $\mathcal F$ are surjective. This is the case when
we take a characteristic zero field $k$ such that $\ii=\sqrt{-1} \notin k$ and we consider $\al\in\bar k$  with minimal polynomial $x^4-a\in k$
Then, $K=k(\alpha)/k$ is a degree 4 separable extension with normal closure $\widetilde{K}=k( \al,\imath)$ and Galois group
$G$ isomorphic to the dihedral group $D_{2\cdot 4}=\langle s,r| s^2=1,\ r^4=1,\ sr=r^{-1}s\rangle$.
We can enumerate the roots of $x^4-a$ so that
$G$ is identified as a transitive subgroup of $S_4$ via $r=(1,2,3,4)$ and $s=(2,4)$.
The group $G'=\Gal(\wk/K)$ is the stabilizer of a point, therefore  $G'=\langle s\rangle $ (modulo conjugation in $G$, which corresponds to
rename the roots of the polynomial, or renumbering them).
It has two normal complements in $G$,
$$N_1=\langle r\rangle\simeq C_4 \mbox{ and }
N_2=\langle r^2, s r \rangle \simeq C_2 \times C_2,$$
so we have two different almost classically Galois structures. Since the main theorem holds in its strong form,
we get two different Hopf algebras $H_1$ and $H_2$ such that $\mathrm{Im\,}{\mathcal F}_{H_1}=\mathrm{Im\,}{\mathcal F}_{H_2}$:
$$\begin{array}{rcl} H_1&=&\{ \la_0 \Id+ (a+b \ii) r+ \la_2 r^2+(a-b \ii) r^3 \mid \la_0, \la_2,a,b \in k  \}
\\&=&<Id, r+r^3, \ii(r-r^3), r^2>_k\end{array}$$
and
$$\begin{array}{rcl}
H_2&=&\{ \la_0 \Id+ \la_1 r^2+ (a+b \ii \al^2) sr+(a-b \ii \al^2) rs\mid \la_0, \la_1,a,b \in k  \}=\\
&= &<Id, r^2, s r+rs, \ii \al^2 (sr-rs)>_k.
\end{array}$$
As for the sub-Hopf algebras,  we only have a proper subgroup of $N_1$, and the fixed field for the sub-Hopf algebra
$F_1=\wk[\langle r^2\rangle]^G$
 is $K^{F1}=k(\alpha^2)$. For $N_2$, we have three subgroups of order 2, and a priori we can consider three sub-Hopf algebras:
$$
\wk[\langle r^2\rangle]^G, \quad \wk[\langle sr\rangle]^G\quad \mbox{ and }\quad \wk[\langle sr^3\rangle]^G.
$$
But only the first one is 2-dimensional, the other two are just $k$. The first subgroup is a normal subgroup of $G$ (stable by conjugation),
for the other two subgroups the intersection of their conjugates is trivial.

Again, one may think that almost classically Galois are too similar to classical Galois extensions and that we have to experiment
with  Hopf Galois extensions not almost classically Galois in order to get more information. In \cite{CRV2} one can see examples
of  non-almost classically Galois extensions having surjective $\mathcal F$. Therefore, the main theorem holds in its strong form for a class
of extensions wider than the class of almost classically Galois extensions and the characterization of this class remains an open problem.

\section{Hopf Galois in small degree}

In previous sections we have already seen the Hopf Galois classification for separable field extensions $K/k$ of
degree 4 and 5. Let us consider the case $[K:k]=6$, where we can see the power of the reformulation in terms of holomorphs given in Theorem \ref{hol}.

Since there are only two isomorphism classes of groups of order 6, we just have to consider
$\Hol(C_6)\simeq D_{2\cdot 6}$ and $\Hol(S_3)\simeq S_3\times S_3\simeq F_{18}:2$.
This last notation follows the naming scheme developed in \cite{CHM}.

None of these holomorphs can have a subgroup of order $24$ o bigger than $36$, none of them has a subgroup
$A_4$ and $\Hol(S_3)$ is not isomorphic to the Frobenius group $F_{36}$.
These facts rule out many possibilities of the list of transitive subgroups of $S_6$ and the remaining ones correspond to Galois or almost classically Galois
extensions. The following table shows the complete classification and more details can be found in \cite{CRV1}.

\begin{center}
\begin{tabular}{l | c | l }
$\Gal(\wk/k)$ & Size & $K/k$\\
\hline
\hline
$C_6$&6  & Galois\\
$S_3$&6 & Galois\\
$D_{2\cdot 6}$ &12 & almost classically Galois\\
$A_4$ &12 & not Hopf Galois \\
$F_{18}$&18 & almost classically Galois\\
$2A_4$ &24 & not Hopf Galois\\
$S_4(6d )$ &24 & not Hopf Galois\\
$S_4(6c)$ &24 & not Hopf Galois\\
$F_{18} : 2$&36 & almost classically Galois\\
$F_{36}$ &36 & not Hopf Galois\\
$2S_4$ &48 & not Hopf Galois\\
$A_5$ &60 & not Hopf Galois\\
$F_{36}: 2$&72 & not Hopf Galois\\
$S_5$ &120 & not Hopf Galois\\
$A_6$ &360 & not Hopf Galois\\
$S_6$ &720 & not Hopf Galois\\
\end{tabular}
\end{center}

\section{Intermediate extensions}

Now we are interested in intermediate fields $K\subset F \subset \tilde K$, since we do not question about $F/k$ being Galois
but we can question about the Hopf Galois condition  for $F/k$. The first interesting case appears already for $[K:k]=4$, where we have non-trivial intermediate fields when
$\Gal(\wk/k)=S_4$. Then, $G'=\Gal(\wk/K)$ is isomorphic to $S_3$ and we can
consider a subgroup $G''\simeq C_3$ of $\Gal(\wk/K)$ and  the fixed field $F=\tilde K^{G''}$.
Since $S_4$ has no normal subgroups of order 8,  $G''$ has no  normal complement in $S_4$ and $F/k$ is not almost classically Galois.
It is shown in \cite{CRV1} that  $F/k$ is a Hopf Galois extension of type $N=C_2\times C_2\times C_2$.
$F=\Q(\alpha,\sqrt{229})$, with $\alpha$ a root of $X^4+X+1\in\Q[x]$, is an explicit example in the smallest possible degree
of such a non-almost classically Galois Hopf Galois extension.

\subsection{Intermediate extensions in small degree}
In degree 4 all the extensions are either Galois or almost classically Galois and the only case with non-trivial intermediate fields is
the one considered above. In this case, intermediate fields of degree 12 are almost classically Galois.

\bigskip
\centerline{$\mathbf{[K:k]=4\qquad k \subset K \subset F \subset \widetilde{K}}$}
\begin{center}
\begin{tabular}{c|c | c | l }
$\Gal(\wk/k)$ &$K/k$ & $[F:k]$ & $F/k$\\
\hline
$S_4$ & Hopf Galois & 8  & Hopf Galois not \\&&& almost classically Galois \\ [1ex]
          && 12  &  almost classically Galois\\

\end{tabular}
\end{center}

\bigskip
For $[K:k]=5$ we have more variety since we have also non-Hopf Galois extensions with non-trivial intermediate
fields. Between a Hopf Galois extension and its Galois closure we find again a small degree example of  Hopf Galois
extensions which are not almost classically Galois. Between a non-Hopf Galois extension and its Galois closure, it seems
that one has to get close to this Galois closure to achieve the Hopf Galois property.

\vspace{1cm}
 \centerline{$\mathbf{[K:k]=5\qquad k \subset K
\subset F \subset \widetilde{K}}$}
\begin{center}
\noindent\begin{tabular}{c|c | c | l }
$\Gal(\wk/k)$ &$K/k$ & $[F:k]$ & $F/k$\\
\hline
$F_5$ & Hopf Galois & 10  & Hopf Galois not\\&&& almost classically Galois \\ [1ex]
 $A_5$& Not Hopf Galois& $15, 20, 30$& Not Hopf Galois \\  [1ex]
   $S_5$& Not Hopf Galois& $10, 15, 20, 30,40$& Not Hopf Galois \\
&& 60& Almost classically Galois \\
\end{tabular}
\end{center}

\bigskip

The range of cases in degree 6 is also detailed in \cite{CRV1} and we collect it in the following table.

\centerline{$\mathbf{[K:k]=6\qquad k \subset K \subset F \subset \widetilde{K}}$}
\begin{center}
\noindent\begin{tabular}{c|c | c | l }
$\Gal(\wk/k)$ &$K/k$ & $[F:k]$ & $F/k$\\
\hline
$F_{18}:2$ & Hopf Galois & $12$  & Hopf Galois not\\&&& almost classically Galois \\ [1ex]
                    & & $18$& Almost classically Galois \\  [1ex]
   $2A_4$& Not Hopf Galois& $12$& $\exists$ almost classically  Galois \\  [1ex]
$S_4(6c)$&Not Hopf Galois& 12&  Not Hopf Galois \\ [1ex]
$S_4(6d)$&Not Hopf Galois& 12&  $\exists$ almost classically  Galois \\ [1ex]
$F_{36}$&Not Hopf Galois& 12&  Not Hopf Galois \\
                   & & $18$& Hopf Galois not \\
                   & & & almost classically Galois \\  [1ex]
$2S_4$&Not Hopf Galois& 12&  Not Hopf Galois \\
                   & & $24$& $\exists$ almost classically  Galois \\ [1ex]
 $A_5$&Not Hopf Galois& $12,30$&  Not Hopf Galois \\  [1ex]
   $F_{36}:2$&Not Hopf Galois& $12,24$&  Not Hopf Galois \\
                      & & $18$& Hopf Galois not \\
                   & & & almost classically Galois \\
                  & & $36$& $\exists$ almost classically  Galois \\ [1ex]
 $S_5$&Not Hopf Galois& $12,24,30,60$&  Not Hopf Galois \\  [1ex]
$A_6$&Not Hopf Galois& $30,36,60,72$&  Not Hopf Galois \\
  & & 90,120,180& \\ [1ex]
$S_6$&Not Hopf Galois& $< 360$&  Not Hopf Galois \\
  & & 360 & $\exists$ almost classically  Galois \\
\end{tabular}
\end{center}


\subsection{Transitivity of Hopf Galois property}

When we started with a non-Hopf Galois extension $K/k$ and we study chains of subfields between $K$ and the Galois closure  $\wk$ looking
for the smallest Hopf Galois extension of $k$, we do not know how  to predict when we are going to find it.
But when we start with a Hopf Galois extension, we have a better knowledge of
what happens with the intermediate lattice of subfields of $\wk/K$.

\begin{theorem}[\cite{CRV1}]
 Let $K/k$ be a separable field extension and $\wk/k$ its Galois closure. Let $F$ be an intermediate field
  $K \subset F \subset \wk$. If $K/k$ and $F/K$ are Hopf Galois extensions, then
   $F/k$ is also a Hopf Galois extension.
\end{theorem}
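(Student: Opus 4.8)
The plan is to pass to the group-theoretic picture of Theorems~\ref{defHG} and~\ref{hol} and then construct, by hand, the regular subgroup that witnesses the Hopf Galois property of $F/k$. Write $G=\Gal(\wk/k)$, $G'=\Gal(\wk/K)$ and $G''=\Gal(\wk/F)$, so $G''\subseteq G'\subseteq G$. Since $\wk$ is the Galois closure of $K/k$, the intersection of the conjugates of $G'$ is trivial, hence so is that of the conjugates of $G''$; thus $\wk$ is also the Galois closure of $F/k$ and the coset action $\lambda\colon G\hookrightarrow\Sym(G/G'')$ is faithful. By Theorem~\ref{defHG} it then suffices to exhibit a regular subgroup $M\subseteq\Sym(G/G'')$, necessarily of order $[G:G'']=[F:k]$, normalized by $\lambda(G)$.

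Unpacking the hypotheses: because $K/k$ is Hopf Galois there is a regular subgroup $N\subseteq\Sym(G/G')$ normalized by the action of $G$ on $G/G'$, equivalently (Theorem~\ref{Byott}) an embedding $\beta_1\colon G\hookrightarrow\Hol(N)$ with $\beta_1(G')$ the stabilizer of $e_N$, so that $\beta_1(G')\subseteq\Aut(N)$. Applying the same to $F/K$ --- whose Galois closure lies in $\wk$ and whose relevant coset space is $G'/G''$ --- the Hopf Galois property of $F/K$ gives a regular subgroup $P\subseteq\Sym(G'/G'')$ normalized by the action of $G'$ on $G'/G''$, equivalently $\beta_2\colon G'\to\Hol(P)$ with $\beta_2(G'')$ the stabilizer of $e_P$, so that $\beta_2(G'')\subseteq\Aut(P)$. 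Note that $[F:k]=[G:G']\,[G':G'']=|N|\,|P|$.

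The structural fact driving the construction is that $G/G''$ is the induced $G$-set $G\times_{G'}(G'/G'')$: the natural surjection $G/G''\to G/G'$ is $G$-equivariant with each fibre a copy of $G'/G''$. Fixing a left transversal of $G'$ in $G$ trivializes this fibration and places $\lambda(G)$ inside the imprimitive wreath product $\Hol(P)\wr\Hol(N)$ acting on $(G'/G'')\times(G/G')$: the permutation of blocks induced by $\lambda(g)$ is $\beta_1(g)\in\Hol(N)$, and the twisting of the fibres by $\lambda(g)$ is an element of the base group $\Hol(P)^{G/G'}$ all of whose coordinates lie in $\beta_2(G')\subseteq\Hol(P)=N_{\Sym(P)}(P)$. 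When $K/k$ is \emph{almost classically} Galois, say $G=N\rtimes G'$ with $N$ normal, this fibration essentially splits --- $G/G''\cong N\times(G'/G'')$ as $G$-sets, with $N$ acting by translation on the first factor and $G'$ acting by conjugation on the first and by translation on the second --- and one verifies directly that $M:=\lambda_N(N)\times P\subseteq\Sym(G/G')\times\Sym(G'/G'')$ is regular of order $|N|\,|P|$ and is normalized by $\lambda(G)$: conjugation by $\lambda(N)$ is transparent, conjugation by $\lambda(G')$ preserves the first factor since an automorphism of $N$ carries left translations to left translations, and preserves the second since $\lambda(G')$ normalizes $P$. In general $M$ has to be built as a ``twisted diagonal'', a copy of $P$ inside $\Hol(P)^{G/G'}$ in which the fibres over the various blocks are glued using that twisting cocycle, together with a compatible lift of $N$ to the wreath product.

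The technical heart --- and the step I expect to be the main obstacle --- is precisely this general construction: regularity of $M$ forces it to be ``diagonal'' across the $|N|$ blocks, whereas $\lambda(G)$-invariance is obstructed by the non-trivial fibre-twisting, which is present exactly when $K/k$ fails to be almost classically Galois; reconciling the two rests on the observation that the discrepancy between any two fibre-identifications coming from the cocycle again lies in $\beta_2(G')$ and therefore normalizes $P$. Everything preceding it --- the reduction to regular subgroups, the identification $G/G''\cong G\times_{G'}(G'/G'')$, and the embedding into the wreath product --- is bookkeeping, and once $M$ has been produced Theorem~\ref{defHG} immediately yields that $F/k$ is Hopf Galois.
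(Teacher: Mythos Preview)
The paper does not actually prove this theorem here; it merely states it and cites \cite{CRV1} for the argument, so there is no ``paper's own proof'' to compare against. What follows is an assessment of your proposal on its own merits.

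Your reduction is sound: passing to $G''\subseteq G'\subseteq G$, observing that $\wk$ is also the Galois closure of $F/k$, extracting the regular subgroups $N\subseteq\Sym(G/G')$ and $P\subseteq\Sym(G'/G'')$, and recognising $G/G''$ as a fibred $G$-set with blocks $G/G'$ and fibres $G'/G''$. Your treatment of the almost classically Galois case is correct: when $G=N\rtimes G'$ the cocycle $h(g,n)=t_{\sigma_g(n)}^{-1}gt_n$ is independent of $n$, so $M=\lambda_N(N)\times P$ is visibly normalised by $\lambda(G)$.

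The gap is exactly where you flag it, and it is genuine. In the general case write $\lambda(g)=\bigl((\lambda_2(h(g,n)))_n,\ \sigma_g\bigr)$ in the wreath product. Conjugating the naive lift $(\mathbf 1,L_\nu)$ of $\nu\in N$ by $\lambda(g)$ produces in the base group the tuple whose $n$-th coordinate is $\lambda_2\bigl(h(g,\nu m)\,h(g,m)^{-1}\bigr)$ with $m=\sigma_g^{-1}(n)$. This element lies in $\lambda_2(G')\subseteq\Norm_{\Sym(G'/G'')}(P)$, as you note, but it need neither lie in $P$ nor be independent of $n$; hence neither $\{(\mathbf 1,L_\nu)\}\cdot P^{\text{diag}}$ nor $\{(\mathbf 1,L_\nu)\}\cdot P^{G/G'}$ is normalised by $\lambda(G)$ (the first fails normalisation, the second fails regularity). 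Your heuristic ``the discrepancy normalises $P$, so it can be absorbed'' does not close the argument: normalising $P$ is strictly weaker than lying in $P$, and any regular $M$ projecting onto $N$ has base part of order only $|P|$, leaving no room to absorb a non-trivial $\Norm(P)/P$-valued cocycle.

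In short, you have correctly identified the obstruction but not overcome it. Producing the ``compatible lift of $N$'' and the ``twisted diagonal'' copy of $P$ simultaneously --- or showing that one can always choose the transversal so that the cocycle becomes $P$-valued --- is the entire content of the theorem, and your sketch does not supply it. You should consult \cite{CRV1} for the actual construction.
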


In all the shown examples of Hopf Galois extensions $K/k$, intermediate fields $K\subset F\subset \tilde K$
are also Hopf Galois extensions of $k$. The condition $F/K$ Hopf Galois in the above theorem is fulfilled in these cases
because of its small degree.
But a Hopf Galois extension $K/k$ of degree 60 can be found  with an intermediate field $F$ such that $F/K$ has degree 5 and Galois closure
with Galois group $A_5$. Therefore the relative extension $F/K$ is not Hopf Galois. On the other hand, the theorem cannot be extended to
the composition of arbitrary separable field extensions, since from the composition of a quadratic and a cubic extension, both Hopf Galois,
we can obtain a sextic field having normal closure with Galois group $F_{36}$, therefore non-Hopf Galois.

\section{Further developments}

 In \cite{Childs0} Childs introduced the idea that Hopf algebras could fruitfully broaden the domain of Galois module theory, namely the
branch of algebraic number theory which studies rings of integers of Galois extensions of number fields as modules over
the integral group ring of the Galois group.

If $K/k$ is a Galois extension of algebraic number fields with Galois group $G$ then $K$ is a $k[G]$-module and the normal basis
theorem states that $K$ is a free $k[G]$-module of rank 1. If we consider local fields with valuation rings ${\mathcal O}_K$ and ${\mathcal O}_k$
we can ask if ${\mathcal O}_K$ has a normal basis as a ${\mathcal O}_k$-free module. Noether's theorem states that this is true if and only
if $K/k$ is tamely ramified.

In the attempt to address the wildly ramified extensions Leopoldt proposed to replace the group ring ${\mathcal O}_k[G]$ by a larger
order, the associated order
$$
\mathfrak A=\{\alpha\in k[G] \mid \alpha(x)\in {\mathcal O}_K  \mbox{ for all } x\in {\mathcal O}_K\}
$$
and proved that if $K/\Q_p$ is abelian then ${\mathcal O}_K$ is a free $\mathfrak A$-module of rank one.
But if one replaces $\Q_p$ by another base field or considers non-abelian extensions, then the result is not true.

Childs proves that for wildly ramified extensions, freeness is deduced from $\mathfrak A$ being a Hopf order of $k[G]$.
With this point of view, one can expand from classical Galois structure to Hopf Galois structures. If $K/k$
is Hopf Galois with Hopf algebra $H$, define the
associated order in the same way
$$
\mathfrak A_H=\{\alpha\in H \mid \alpha(x)\in {\mathcal O}_K  \mbox{ for all } x\in {\mathcal O}_K\}
$$
and the same result holds: if $\mathfrak A$ is a Hopf order of $H$, then ${\mathcal O}_K$ is a free $\mathfrak A$-module of rank one.
In \cite{Byott97} Byott gives examples of wildly ramified Galois extensions for which
${\mathcal O}_K$ is not free over $\mathfrak A_{k[G]}$ but is free over $\mathfrak A_H$ for some
Hopf algebra $H$ giving a non-classical structure. We refer to \cite{Childs} for a survey on this subject and to the work of
Truman \cite{Tru} for recent results on this subject.


\bibliographystyle{amsplain}

\vspace{1cm}

\footnotesize

\noindent Teresa Crespo, Departament d'\`Algebra i Geometria,
Universitat de Barcelona, Gran Via de les Corts Catalanes 585,
E-08007 Barcelona, Spain, e-mail: teresa.crespo@ub.edu

\vspace{0.3cm} \noindent Anna Rio, Departament de Matem\`atica
Aplicada II, Universitat Polit\`ecnica de Catalunya, C/Jordi
Girona, 1-3 Edifici Omega, E-08034 Barcelona, Spain, e-mail:
ana.rio@upc.edu

\vspace{0.3cm} \noindent Montserrat Vela, Departament de
Matem\`atica Aplicada II, Universitat Polit\`ecnica de Catalunya,
C/Jordi Girona, 1-3 Edifici Omega, E-08034 Barcelona, Spain,
e-mail: \linebreak montse.vela@upc.edu

\end{document}